\theoremstyle{plain}
    \newtheorem{theorem}{Theorem}[section]
    \newtheorem{cor}[theorem]{Corollary}
    \newtheorem{lemma}[theorem]{Lemma}
    \newtheorem{question}[theorem]{Question}        
\theoremstyle{definition}
    \newtheorem*{defn}{Definition}
    \newtheorem{example}[theorem]{Example}
\theoremstyle{remark}
    \newtheorem{remark}[theorem]{Remark}
\newcommand{\R}{\ensuremath{\mathbb{R}}}
\newcommand{\C}{\ensuremath{\mathbb{C}}}
\newcommand{\Q}{\ensuremath{\mathbb{Q}}}
\newcommand{\llb}{\llbracket} 
\newcommand{\rrb}{\rrbracket}
\DeclareMathOperator{\Spec}{Spec}
\DeclareMathOperator{\Ass}{Ass}
\DeclareMathOperator{\Min}{Min}
\DeclareMathOperator{\height}{ht}
\DeclareMathOperator{\dep}{depth}   
\DeclareMathOperator{\depth}{depth} 
\newcommand{\cstyle}[1]{\emph{(\!\! #1)}}
\title{Maximal Chains of Prime Ideals of Different Lengths in Unique Factorization Domains}
\author{S. Loepp and Alex Semendinger}
\begin{document}
\maketitle

\doublespacing

\begin{abstract}
We show that, given integers $n_1,n_2, \ldots ,n_k$ with $2 < n_1 < n_2 < \cdots < n_k$,
there exists a local (Noetherian) unique factorization domain that has maximal chains
of prime ideals of lengths $n_1, n_2, \ldots ,n_k$ which are disjoint except at their
minimal and maximal elements.  In addition, we demonstrate that unique
factorization domains can have other unusual
prime ideal structures.
\end{abstract}

\section{Introduction}

One important open question in commutative algebra is:
\textit{given a partially ordered set $X$, is $X$ isomorphic to the
  prime spectrum of some Noetherian ring?} While this question itself
has not been answered, progress towards it has been made. For
instance, in \cite{hochster}, Hochster answered the question of when
$X$ is isomorphic to the prime spectrum of a possibly non-Noetherian
commutative ring.

One approach to the problem is to consider what posets may arise as
\textit{finite subsets} of $\Spec R$ for a Noetherian ring $R$. It was
once thought that these would necessarily possess ``nice'' properties:
most notably, it was at one point thought that all Noetherian rings
must be catenary. This
was disproven in 1956 in\cite{nagata1956chain}, when Nagata constructed a noncatenary
Noetherian integral domain. In 1979, Heitmann proved a much stronger
result: in \cite{heitmann1979}, he showed that, for every finite
partially ordered set $X$, there exists a Noetherian domain $R$ such
that $X$ can be embedded into $\Spec R$ with an embedding that
preserves saturated chains.

It is natural to ask if Heitmann's theorem holds for other classes of
rings, such as Noetherian unique factorization domains (UFDs). One
might expect that the additional structure of a UFD would place
restrictions on its prime ideal structure. More precisely, we ask the
following open question:
\begin{question}\label{question}
  For every finite partially ordered set $X$, does there exist a
  Noetherian UFD $A$ such that $X$ can be embedded into $\Spec A$ with
  an embedding that preserves saturated chains?
\end{question}

It has long been thought that this question could be answered in the
negative. For instance, after 1956, it was conjectured that all
Noetherian unique factorization domains (and more generally, all
integrally closed domains) must be catenary. This was disproved by
Heitmann in \cite{heitmann1993}, in which he constructed a noncatenary
Noetherian UFD. This was the only example of a noncatenary Noetherian UFD in the
literature until 2017, when Avery, et. al. characterized the
completions of noncatenary UFDs in \cite{small}, which generated a new
class of examples. As a corollary of this result (\!\!\cite{small},
Proposition 4.2), Avery, et. al. showed that there is no limit on
``how noncatenary'' a local UFD can be, in the sense that it can
posess maximal chains of prime ideals with an arbitrarily large
difference in lengths. Inasmuch as this result furnishes more examples
of noncatenary Noetherian UFDs, it provides mild evidence in favor of
an affirmative answer to Question \ref{question}.

In this paper, although we do not provide an answer to Question \ref{question},
we further develop the techniques from \cite{small} and
give explicit examples of noncatenary partially ordered sets which can be embedded in
a saturated way into the prime spectrum of some Noetherian UFDs. As a
result, we not only greatly expand our collection of examples of
noncatenary local UFDs, but we also provide additional evidence in favor
of the analogue of Heitmann's 1979 theorem holding for UFDs.

Our main result (Theorem \ref{makeA}) is that, given integers $n_1,n_2, \ldots ,n_k$, 
with $2 < n_1 < n_2 < \cdots < n_k$,
there exists a local (Noetherian) unique factorization domain that has maximal chains
of prime ideals of lengths $n_1, n_2, \ldots ,n_k$ which are disjoint except at their
minimal and maximal elements.  To prove this result, we first start with a complete
local ring $T$ that has maximal chains of prime ideals of lengths $n_1,n_2, \ldots , n_k$.  We then
use the construction in \cite{heitmann1993} to find a local UFD $A$ whose completion with respect to its maximal ideal is $T$ and such
that the intersections of the chains of prime ideals of $T$ with $A$ results in maximal chains in
$A$ of lengths $n_1,n_2, \ldots ,n_k$.  Finally, we show that, after some adjustment
of the height one prime ideals in the chains, the chains are disjoint.


\section{Preliminaries}

Throughout this paper, all rings are assumed to be commutative with unity. 
Additionally, the symbol $\subset$ will indicate strict
containment. 

\begin{defn}
  If $R$ is a Noetherian ring with exactly one maximal ideal $M$, then
  we say $(R,M)$ is a \textbf{local ring}. If $R$ is not necessarily
  Noetherian but has one maximal ideal, then we say it is
  \textbf{quasi-local}.
\end{defn}

If $A$ is a local ring, then $\widehat{A}$ will denote
the completion of $A$ with respect to its maximal ideal.

In order to formally state the motivating question of our research
and our results, we need to develop some terminology regarding
partially ordered sets (posets).

\begin{defn}
  A totally ordered subset of a partially ordered set $X$ is called a
  \textbf{chain}.
\end{defn}

\begin{defn}
  Let $x_0 < x_1 < \dots < x_n$ be a finite chain of elements in a
  poset $X$. We say that this chain has \textbf{length} $n$. 

  If the above chain has the property that, for each
  $i \in \{0, \dots, n-1\}$, there are no elements $y \in X$ such that
  $x_i < y < x_{i+1}$, then we say the chain is \textbf{saturated}.

  If the above chain is saturated and, additionally, $x_0$ is a
  minimal element of $X$ and $x_n$ is a maximal element, then we say
  the chain is \textbf{maximal}.
\end{defn}

\begin{defn}
  Let $X$ be a poset. The \textbf{height} of $x \in X$ is the length
  of the longest saturated chain of elements $x_0 < x_1 < \dots < x$
  and is denoted $\height x$. The \textbf{dimension} of $X$ is given
  by $\dim X := \sup\{\height x : x \in X\}$.
\end{defn}

Recall that, when we refer to the height of a prime ideal of a ring $R$, we mean
its height as a member of the poset $(\Spec R,
\subseteq)$. Furthermore, $\dim R = \dim \Spec R$ and is
the Krull dimension of the ring $R$. 

\begin{defn}
  We say a poset $X$ is \textbf{catenary} if, for any $x,y \in X$ such
  that $x < y$, all saturated chains of elements between $x$ and $y$
  have the same length.
\end{defn}

This definition is usually applied to rings, where we say a ring $R$
is catenary precisely when the poset $(\Spec R, \subseteq)$ is catenary.

\begin{defn}
  Let $X$ and $Y$ be posets. An injective order-preserving function
  $\phi: X \to Y$ will be called a \textbf{saturated embedding} if
  $\phi$ sends saturated chains in $X$ to saturated chains in $Y$. If,
  additionally, $\dim(X) = \dim(\phi(X))$, we will say
  $\phi$ is a \textbf{dimension-preserving saturated embedding.}
\end{defn}

Our motivating question can now be formally stated: \textit{for which
  partially ordered sets $X$ does there exist a local UFD $A$ and a dimension-preserving
  saturated embedding from $X$ to $\Spec A$?}  In the next section, we show that the set of posets for which
  this can be done is larger than previously known.

We now state results which will be used in later proofs.
First, we present two very useful prime avoidance lemmas from
\cite{heitmann1993} and a ``cardinality" lemma from \cite{SMALL2003}. 

\begin{lemma}\label{heitmann2}
  \cstyle{\cite{heitmann1993}, Lemma 2} Let $(T,M)$ be a complete
  local ring, $C$ a countable set of prime ideals of $T$ such that
  $M \notin C$, and $D$ a countable set of elements of $T$. If $I$ is
  an ideal of $T$ which is contained in no single $P \in C$, then
  $I \not\subseteq \bigcup\{ (r + P) : P \in C, \ r \in
  D\}$. 
\end{lemma}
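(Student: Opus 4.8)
The plan is to prove the statement constructively: assuming $I \not\subseteq P$ for every $P \in C$, I would directly build an element $x \in I$ lying in none of the cosets $r + P$, using the completeness of $T$ to pass to a limit. Since $C$ and $D$ are countable, the set of pairs $(P,r)$ with $P \in C$ and $r \in D$ is countable, so I would enumerate it as $(P_1,r_1),(P_2,r_2),\dots$ and arrange, one index at a time, that the candidate element escapes the coset $r_n + P_n$.

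The construction would be a successive-approximation (Cauchy sequence) argument. I would build elements $x_0,x_1,x_2,\dots$ of $I$ together with an increasing sequence of exponents $k_0 < k_1 < \cdots$ so that $x_n - x_{n-1} \in M^{k_{n-1}}$ (making the sequence Cauchy) and $x_n - r_n \notin P_n + M^{k_n}$ (an ``open'' strengthening of the condition $x_n - r_n \notin P_n$). To pass from $x_{n-1}$ to $x_n$, I would seek a correction $y \in I \cap M^{k_{n-1}}$ with $x_{n-1} + y - r_n \notin P_n$. The key algebraic input is that $I \cap M^{k_{n-1}} \not\subseteq P_n$: indeed $I\,M^{k_{n-1}} \subseteq I \cap M^{k_{n-1}}$, and since $P_n$ is prime with $I \not\subseteq P_n$ and $M \not\subseteq P_n$ (as $P_n \neq M$), the product $I\,M^{k_{n-1}}$ escapes $P_n$. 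Thus there is $z \in I \cap M^{k_{n-1}}$ with $z \notin P_n$; taking $y = 0$ when $x_{n-1} - r_n \notin P_n$ and $y = z$ otherwise forces $x_{n-1} + y - r_n \notin P_n$. Because $\bigcap_k (P_n + M^k) = P_n$ by Krull's intersection theorem applied to the local ring $T/P_n$, I may then pick $k_n > k_{n-1}$ so that $x_n := x_{n-1} + y$ satisfies $x_n - r_n \notin P_n + M^{k_n}$, as required.

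Completeness of $T$ then yields a limit $x = \lim_n x_n$, and $x \in I$ since $x \in x_n + M^{k_n} \subseteq I + M^{k_n}$ for every $n$ while $\bigcap_n (I + M^{k_n}) = I$ by Krull intersection. For a fixed $n$, each later correction lies in $M^{k_{m-1}} \subseteq M^{k_n}$ for $m > n$, so $x - x_n \in M^{k_n}$; writing $x - r_n = (x_n - r_n) + (x - x_n)$ with $x_n - r_n \notin P_n + M^{k_n}$ and $x - x_n \in M^{k_n} \subseteq P_n + M^{k_n}$ gives $x - r_n \notin P_n + M^{k_n}$, hence $x - r_n \notin P_n$, i.e. $x \notin r_n + P_n$. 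As this holds for every pair, $x \in I \setminus \bigcup\{(r+P) : P \in C,\ r \in D\}$, which is exactly the conclusion.

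I expect the main obstacle to be precisely the bookkeeping that a correction made to defeat the coset $r_n + P_n$ does not revive a coset condition already defeated at an earlier stage. This is what compels the corrections to live in successively higher powers of $M$ and motivates strengthening each satisfied condition from the closed form ``$\notin P_n$'' to the open form ``$\notin P_n + M^{k_n}$'': the open condition is stable under all subsequent perturbations, which lie in $M^{k_n}$, whereas the closed one need not be. Completeness enters only to guarantee that the Cauchy sequence has a limit and, together with Krull intersection, to keep that limit inside $I$.
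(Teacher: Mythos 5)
Your proof is correct: the enumeration of $C \times D$, the correction step using $IM^{k_{n-1}} \not\subseteq P_n$ (valid since $P_n$ is prime, $I \not\subseteq P_n$, and $M \not\subseteq P_n$ because $M \notin C$), the Krull-intersection strengthening to the open condition $x_n - r_n \notin P_n + M^{k_n}$, and the passage to the limit all hold up. Note that this paper does not prove the lemma but quotes it from Heitmann's 1993 paper; your successive-approximation argument via completeness is essentially the same approach as the cited source, so there is nothing to flag.
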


\begin{lemma}\label{heitmann3}
  \cstyle{\cite{heitmann1993}, Lemma 3} Let $(T,M)$ be a 
  local ring. Let $C \subset \Spec T$, let $I$ be an ideal of $T$ such
  that $I \not\subset P$ for all $P \in C$, and let $D \subset
  T$. Suppose $|C \times D| < |T/M|$. Then
  $I \not\subseteq \bigcup \{ (r + P) : P \in C,\ r \in D\}$.
\end{lemma}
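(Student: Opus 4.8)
The plan is to transfer the problem into linear algebra over the residue field $k := T/M$, where the hypothesis $I \not\subseteq P$ can be upgraded, uniformly over all $P \in C$, to a statement about proper subspaces. Since $T$ is local (hence Noetherian), $I$ is finitely generated, and I may assume $I \neq 0$ (otherwise $I \subseteq P$ for every $P \in C$, contrary to hypothesis, so $C = \emptyset$ and the claim is trivial). Set $V := I/MI$; by Nakayama's lemma $V$ is a nonzero $k$-vector space. For each $P \in C$, consider the image $\overline{I \cap P} := \big((I \cap P) + MI\big)/MI$ inside $V$. I claim this is a \emph{proper} $k$-subspace: if it were all of $V$, then $I = (I \cap P) + MI$, and applying Nakayama to the finitely generated module $I/(I \cap P)$ would force $I = I \cap P$, i.e.\ $I \subseteq P$, a contradiction.

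Next I would reduce the containment $I \subseteq \bigcup\{(r+P) : P \in C,\ r \in D\}$ to a covering statement in $V$. Each nonempty intersection $I \cap (r+P)$ is a coset $x_0 + (I \cap P)$ of $I \cap P$, so its image in $V$ is the affine translate $\overline{x_0} + \overline{I \cap P}$, a proper affine subspace of $V$ (its direction space $\overline{I \cap P}$ is proper). If the containment held, then every $x \in I$ would lie in some $r+P$, whence its image $\overline{x}$ would lie in one of these affine subspaces; since $I$ surjects onto $V$, this would exhibit $V$ as a union of at most $|C \times D|$ proper affine subspaces. The key step is then the following covering lemma: \emph{a vector space over a field $k$ cannot be written as a union of fewer than $|k|$ proper affine subspaces}. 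Because $|C \times D| < |T/M| = |k|$, this lemma yields the desired contradiction and completes the proof.

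The heart of the argument, and the step I expect to require the most care, is the covering lemma itself, which I would prove in the same spirit as Lemma~\ref{heitmann2} but with the cardinality of $k$ playing the role previously taken by completeness and countability. After reducing to a two-dimensional linear slice (or handling $\dim_k V \le 1$ directly, where proper affine subspaces are single points), I would choose a line $\ell \subseteq V$ whose direction avoids the fewer-than-$|k|$ directions of the given affine subspaces, so that $\ell$ meets each of them in at most one point; since $\ell$ has $|k|$ points but fewer than $|k|$ of them are covered, some point of $\ell$ — and hence, pulling back, some element of $I$ — escapes the union. The subtleties to verify are that a transversal direction can always be found and that the slicing is compatible with an infinite-dimensional (or finite) residue field, where the strict inequality $|C \times D| < |k|$ is precisely what guarantees enough room. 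Notably, passing to $V = I/MI$ is what dissolves the apparent obstacle of primes $P$ containing a chosen multiplier: Nakayama makes \emph{every} $\overline{I \cap P}$ proper at once, so no prime in $C$ is exceptional.
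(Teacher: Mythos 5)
The paper never proves this statement: it is quoted as a preliminary (Heitmann's Lemma 3 from \cite{heitmann1993}), so there is no internal proof to compare against, and your argument must be judged on its own. Your reduction is correct and clean: since $T$ is Noetherian and we may assume $I \neq 0$, the space $V = I/MI$ is a nonzero finite-dimensional vector space over $k = T/M$; Nakayama applied to the finitely generated module $I/(I \cap P)$ shows that the image of $I \cap P$ in $V$ is a proper subspace for every $P \in C$; each nonempty $I \cap (r+P)$ is a coset of $I \cap P$; and surjectivity of $I \to V$ turns the hypothesized covering of $I$ into a covering of $V$ by at most $|C \times D|$ proper affine subspaces. The covering lemma you invoke (a nonzero $k$-vector space is not a union of fewer than $|k|$ proper affine subspaces) is true, so the architecture is sound --- and it is a genuinely different route from the usual ring-theoretic proofs of this lemma, which proceed by induction on a generating set $a_1, \dots, a_n$ of $I$, using that distinct coset representatives of $T/M$ differ by units so that each pair $(P,r)$ can capture at most one element of a family $ua_1 + y$, $u$ ranging over representatives.

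The genuine gap is in your sketch of the covering lemma, and it is a circularity. Choosing ``a line $\ell$ whose direction avoids the fewer-than-$|k|$ directions of the given affine subspaces'' means producing a vector $b \in V$ lying outside every direction space $W_i$; since the $W_i$ are arbitrary proper subspaces, this is exactly the linear case of the covering lemma you are in the middle of proving. The proposed ``reduction to a two-dimensional linear slice'' does not break the circle: when $\dim_k V \geq 3$, a plane $U \subseteq V$ may be \emph{contained} in some $A_i$ (namely when $A_i$ is a linear subspace of dimension $\geq 2$), in which case the traces $A_i \cap U$ are not proper in $U$ and the slice argument collapses; choosing $U$ to avoid containment in all such $A_i$ is again an avoidance problem of the same kind. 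A standard repair is induction on $\dim_k V$ (legitimate here, as $V$ is finite-dimensional): if $V = \bigcup_{i \in J} A_i$ with $|J| < |k|$, then for each linear hyperplane $H \subset V$, either $H$ equals some $A_i$, or every $A_i \cap H$ is empty or a proper affine subspace of $H$, contradicting the inductive hypothesis; hence every hyperplane of $V$ occurs among the $A_i$, which is absurd since the pencil of hyperplanes through a fixed codimension-two subspace already has $|k|+1 > |J|$ members. (A Vandermonde/moment-curve choice of transversal direction also works, but only for infinite $k$; the induction also covers finite residue fields, which the statement permits.) With that lemma properly proved, your argument is complete.
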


\begin{lemma}\label{cardinality}
  \cstyle{\cite{SMALL2003}, Lemma 2.2}
  Let $(T,M)$ be a complete local ring of dimension at least one. Let
  $P$ be a nonmaximal prime ideal of $T$. Then $|T/P| = |T| \geq
  |\R|$. 
\end{lemma}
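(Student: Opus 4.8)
The plan is to pass to the quotient $S := T/P$ and to prove the sharper statement $|T| = |T/P| = |k|^{\aleph_0}$, where $k := T/M$ is the common residue field; since $k$ is a field we have $|k| \geq 2$, so $|k|^{\aleph_0} \geq 2^{\aleph_0} = |\R|$, which delivers the displayed inequality. First I would record the structural facts about $S$: it is again a complete local ring (a quotient of a complete local ring by an ideal is complete local), it is a domain because $P$ is prime, and it has dimension at least one because $P \subsetneq M$ already produces a chain of length one in $\Spec S$. Its maximal ideal is $\mathfrak{m} := M/P$ and its residue field is again $k$. The whole argument then reduces to sandwiching the relevant cardinals between two copies of $|k|^{\aleph_0}$.

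For the upper bound I would use completeness in the ``limit'' direction. Since $T \cong \varprojlim_n T/M^n$, the natural map $T \hookrightarrow \prod_{n \geq 1} T/M^n$ is injective (equivalently $\bigcap_n M^n = 0$), so $|T| \leq \prod_n |T/M^n|$. Each $T/M^n$ is Artinian and admits a finite filtration by the $M^i/M^n$ whose successive quotients $M^i/M^{i+1}$ are finite-dimensional $k$-vector spaces; counting gives $|T/M^n| = |k|^{D_n}$ for some finite $D_n$. When $k$ is infinite this is just $|k|$, and when $k$ is finite it is a finite number that is at least $2$. In either case the countable product evaluates to $\prod_n |T/M^n| = |k|^{\aleph_0}$, so $|T| \leq |k|^{\aleph_0}$. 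Since $T \twoheadrightarrow S$, we also get $|S| = |T/P| \leq |T| \leq |k|^{\aleph_0}$.

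The lower bound is the heart of the matter, and here I would exploit completeness in the other direction by manufacturing many distinct convergent series. Fix $x \in \mathfrak{m} \smallsetminus \{0\}$ (possible since $\dim S \geq 1$ forces $\mathfrak{m} \neq 0$) and choose a set $\Lambda \subseteq S$ of coset representatives for $S/\mathfrak{m} = k$, so $|\Lambda| = |k|$ and any two distinct elements of $\Lambda$ differ by a unit. Define $\Phi : \Lambda^{\N} \to S$ by $\Phi\big((\lambda_i)_i\big) = \sum_{i \geq 0} \lambda_i x^i$, which converges in the $\mathfrak{m}$-adic topology because $x^i \in \mathfrak{m}^i$ and $S$ is complete. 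The key claim is that $\Phi$ is injective: if two sequences first disagree in position $j$, then factoring out $x^j$ expresses the difference of their images as $x^j u$, where $u$ is a unit (a unit coming from $\lambda_j - \mu_j$, plus a tail lying in $\mathfrak{m}$) and $x^j \neq 0$; since $S$ is a domain this product is nonzero. Hence $|S| \geq |\Lambda|^{\aleph_0} = |k|^{\aleph_0}$.

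Combining the two bounds yields $|k|^{\aleph_0} \leq |S| = |T/P| \leq |T| \leq |k|^{\aleph_0}$, so all three cardinals coincide and equal $|k|^{\aleph_0} \geq |\R|$, as required. I expect the injectivity of $\Phi$ to be the main obstacle: it is precisely here that the domain hypothesis (inherited from $P$ being prime) and the local structure are essential, and one must argue carefully that the entire tail of the series beginning in degree $j$ contributes only an element of $\mathfrak{m}$ so that the leading unit coefficient is not annihilated. A secondary point to handle with care is the finite residue field case, where it is the clean identity $|k|^{\aleph_0} = 2^{\aleph_0}$ that keeps the upper and lower bounds in lockstep.
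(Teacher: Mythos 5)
Your proof is correct, and there is actually nothing in the paper to compare it against: the paper states this lemma without proof, importing it verbatim from \cite{SMALL2003}, Lemma 2.2. Your two-sided counting argument --- the embedding $T \hookrightarrow \prod_{n} T/M^n$ giving the upper bound $|T| \le |k|^{\aleph_0}$, and the injection $\Lambda^{\N} \to T/P$, $(\lambda_i) \mapsto \sum_i \lambda_i x^i$, giving the lower bound --- is the standard route to this fact and is essentially the argument used in the cited literature. The delicate points all check out: distinct coset representatives of $k$ differ by units since anything outside the maximal ideal of a quasi-local ring is a unit; the tail $\sum_{i>j}(\lambda_i - \mu_i)x^{i-j}$ lies in $x(T/P)$, hence in the maximal ideal (so no appeal to closedness of ideals is even needed); therefore the bracketed factor is a unit, and $x^j \neq 0$ because $T/P$ is a domain --- which is precisely where primeness of $P$ enters, as you note. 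The cardinal arithmetic is also handled correctly in both residue-field cases, since $\prod_n |k|^{D_n} = |k|^{\sum_n D_n} = |k|^{\aleph_0}$ and $|k|^{\aleph_0} \ge 2^{\aleph_0} = |\R|$. Your conclusion $|T| = |T/P| = |k|^{\aleph_0}$ is in fact slightly sharper than the stated lemma, which only asserts $|T/P| = |T| \ge |\R|$.
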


The construction in \cite{heitmann1993} starts with a subring of a
complete local ring $T$ which must satisfy several properties. Any
ring satisfying these properties is called an N-subring, which we now
define.

\begin{defn}
  Let $(T,M)$ be a complete local ring. A quasi-local subring
  $(R, M \cap R)$ of $T$ is an \textbf{N-subring} if $R$ is a UFD and
  \begin{enumerate}
  \item $|R| \leq \sup(\aleph_0, |T/M|)$, with equality only when $T$
    is countable,
  \item $Q \cap R = (0)$ for all $Q \in \Ass T$, and
  \item if $t \in T$ is regular and $P \in \Ass(T/tT)$, then $\height
    (P \cap R) \leq 1$. 
  \end{enumerate}
\end{defn}

We will also need a way to adjoin elements to an N-subring such that
the resulting ring is an N-subring as well. The following lemma allows
us to do so.

\begin{lemma}\label{loepp11}
  \cstyle{\cite{loepp}, Lemma 11}
  Let $(T,M)$ be a complete local ring and let $R$ be an N-subring of
  $T$. Suppose $C \subset \Spec T$ satisfies the following conditions:
  \begin{enumerate}
  \item $M \notin C$,
  \item
    $\{P \in \Spec T : P \in \Ass T/rT \text{ for some } \ 0 \neq r \in R\}
    \subset C$, and
  \item $\Ass T \subset C$.
  \end{enumerate}
  Let $x \in T$ be such that $x \notin P$ and $x+P$ is transcendental
  over $R/(R \cap P)$ as an element of $T/P$ for every $P \in C$. Then
  $S = R[x]_{(R[x] \cap M)}$ is an N-subring of T properly containing
  $R$ and $|S| = \sup\{\aleph_0, |R|\}$.
\end{lemma}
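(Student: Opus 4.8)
The plan is to verify the four defining properties of an N-subring for $S = R[x]_{(R[x]\cap M)}$ in turn, after first recording that $S$ is a quasi-local UFD lying between $R$ and $T$. First I would check that $x$ is transcendental over $R$: if $\sum a_i x^i = 0$ with $a_i \in R$ not all zero, then reducing modulo any $Q \in \Ass T \subseteq C$ (condition (3) on $C$) and invoking the transcendence of $x + Q$ over $R/(R\cap Q)$ forces each $a_i \in Q \cap R = (0)$ by property (2) of the N-subring $R$, a contradiction. Hence $R[x]$ is a genuine polynomial ring and therefore a UFD, and since every element of $R[x]\setminus M$ is a unit of $T$, the localization $S$ sits inside $T$ as a quasi-local UFD with maximal ideal $M \cap S$ properly containing $R$. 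The cardinality claim $|S| = \sup\{\aleph_0,|R|\}$ is then routine cardinal arithmetic (adjoining one variable and localizing do not increase cardinality past $\sup\{\aleph_0,|R|\}$), and property (1) for $S$ is inherited from property (1) for $R$.

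For property (2), I would fix $Q \in \Ass T$ and show $Q \cap S = (0)$. Exactly as above, any $f \in Q \cap R[x]$ reduces to a vanishing polynomial relation for the transcendental $x+Q$ over $R/(R\cap Q)$, so $f = 0$ and $Q \cap R[x] = (0)$. Since an element of $Q \cap S$ can be written $f g^{-1}$ with $f \in R[x]$ and $g \in R[x]\setminus M$, and $f = gs \in Q \cap R[x] = (0)$, we conclude $Q \cap S = (0)$.

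The substantive step is property (3). Fix a regular $t \in T$ and $P \in \Ass(T/tT)$; I would show $\height(P\cap S) \le 1$. As $S$ is the localization of $R[x]$ at a prime containing $P \cap R[x]$, it suffices to bound $\height_{R[x]}(P\cap R[x])$. Writing $\mathfrak{p} = P \cap R$, the standard height inequality for polynomial extensions gives $\height_{R[x]}(P\cap R[x]) \le \height_R \mathfrak{p} + 1 \le 2$, with the value $2$ possible only in the \emph{closed-point case} where $\height_R \mathfrak{p} = 1$ and $P \cap R[x] \supsetneq \mathfrak{p}[x]$. The heart of the proof is to rule this case out. Because $R$ is a UFD, $\height_R\mathfrak{p}=1$ forces $\mathfrak{p} = \pi R$ for a prime element $\pi$, and property (2) of $R$ shows $\pi$ lies in no associated prime of $T$, hence is regular in $T$; condition (2) on $C$ (with $r=\pi$) then places every prime of $\Ass(T/\pi T)$ in $C$. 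In the closed-point case there is a witness $g \in P \cap R[x]$ with $\pi \nmid g$ in $R[x]$.

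To finish, I would pass to $T_P$: since $t$ is $T_P$-regular and $PT_P \in \Ass(T_P/tT_P)$, we get $\depth T_P = 1$, and as $\pi \in P$ is also $T_P$-regular it follows that $\depth(T_P/\pi T_P) = 0$. Thus every element of $PT_P$, in particular $g$, is a zerodivisor on $T_P/\pi T_P$, so $g$ lies in some $P' \in \Ass(T/\pi T)$ with $\pi \in P' \subseteq P$. Now $P' \cap R = \pi R = \mathfrak{p}$, being squeezed between $\pi R$ and $P \cap R$, and $P' \in C$, so the transcendence of $x + P'$ over $R/\mathfrak{p}$ gives $P' \cap R[x] = \mathfrak{p}[x] = \pi R[x]$; but then $g \in \pi R[x]$ contradicts $\pi \nmid g$. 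This contradiction establishes $\height(P\cap S)\le 1$. I expect the main obstacle to be precisely this last maneuver: the transcendence hypothesis is assumed only for primes in $C$, and $P$ itself need not lie in $C$, so the crux is to exploit the UFD structure together with the depth and associated-prime analysis in $T_P$ to manufacture a prime $P' \in C$ that both contains the witness $g$ and contracts to $\mathfrak{p}$, thereby bringing the transcendence hypothesis to bear where it is needed.
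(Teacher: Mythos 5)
This lemma is not proved in the paper at all: it is quoted as a preliminary result from \cite{loepp} (Lemma 11), so there is no in-paper proof to compare your attempt against. Measured against the standard argument in the literature (Heitmann's and Loepp's), your proposal follows essentially the same route and gets all the key ideas right: transcendence of $x$ modulo the primes of $\Ass T \subseteq C$ makes $R[x]$ a genuine polynomial ring (hence a UFD) and yields conditions (1) and (2); and for condition (3) you correctly identify and resolve the crux --- that $P \in \Ass(T/tT)$ need not lie in $C$ --- by writing $P \cap R = \pi R$ with $\pi$ prime (using that $R$ is a UFD), noting $\pi$ is $T$-regular by condition (2) on $R$, and running a depth computation in $T_P$ to produce an associated prime of $T/\pi T$, which lies in $C$ by hypothesis, where the transcendence assumption can be applied. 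Incidentally, your auxiliary prime $P'$ can simply be taken to be $P$ itself: $\depth(T_P/\pi T_P) = 0$ says precisely that $PT_P \in \Ass(T_P/\pi T_P)$, hence $P \in \Ass(T/\pi T) \subseteq C$, and transcendence of $x + P$ then gives $P \cap R[x] = \pi R[x]$ immediately.

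One step does need repair. You invoke ``the standard height inequality for polynomial extensions,'' $\height_{R[x]}(P \cap R[x]) \leq \height_R(\mathfrak{p}) + 1$. That inequality is a theorem about \emph{Noetherian} rings, and an N-subring need not be Noetherian (the N-subrings arising in these constructions are typically countable ascending unions of localized polynomial rings); for general quasi-local domains the inequality genuinely fails, by Seidenberg's examples of one-dimensional quasi-local domains $D$ with $\dim D[x] = 3$. Fortunately, your own argument makes this appeal unnecessary. If $\mathfrak{p} = (0)$, every prime of $R[x]$ contained in $P \cap R[x]$ contracts to $(0)$ in $R$, so the poset of such primes embeds in $\Spec K[x]$ with $K$ the fraction field of $R$, and $\height(P \cap R[x]) \leq 1$ because $K[x]$ is a PID. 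If $\mathfrak{p} = \pi R$, your contradiction via the witness $g$ shows there is no $g \in P \cap R[x]$ with $\pi \nmid g$, i.e.\ $P \cap R[x] = \pi R[x]$; this is a principal prime of the UFD $R[x]$, and a principal prime in any UFD has height one, since a nonzero prime strictly inside $\pi R[x]$ would consist of elements lying in $\bigcap_n \pi^n R[x] = (0)$. No Noetherian hypothesis is needed anywhere. With the height bound rephrased this way, your proof is complete and correct.
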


Here and elsewhere, we tacitly make use of the natural injection
$R/(R \cap P) \to T/P$ defined by $r + (R \cap P) \mapsto r + P$,
where $R, T$, and $P$ are as defined in Lemma \ref{loepp11}. Under
this injection, we can think of elements of $T/P$ as being algebraic or
transcendental over the ring $R/(R \cap P)$. Determinations of this sort will
prove crucial in Lemma \ref{genset} and in Theorem \ref{makeA}.

The construction of specific posets within $\Spec A$ starts by
carefully choosing chains of prime ideals of $T = \widehat A$, which
the next lemma allows us to do. These chains, when taken together and
intersected with $A$, will essentially form the desired posets.

\begin{lemma}\label{small2.8}
  \cstyle{\cite{small}, Lemma 2.8}
  Let $(T,M)$ be a local ring with $M \notin \Ass T$ and let $P \in
  \Min T$ with $\dim(T/P) = n$. Then there exists a saturated chain of
  prime ideals of $T$, $P \subset Q_1 \subset \dots \subset Q_{n-1}
  \subset M$, such that, for each $i = 1, \dots, n-1$, we have $Q_i
  \notin \Ass T$ and $P$ is the only minimal prime ideal of $T$
  contained in $Q_i$. 
\end{lemma}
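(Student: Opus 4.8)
The plan is to build the chain from the bottom up, one prime at a time, maintaining at every stage both the correct codimension (so the final chain is saturated of length exactly $n$) and the two avoidance conditions. Since $P \in \Min T \subseteq \Ass T$ while $M \notin \Ass T$, we have $P \neq M$ and hence $n = \dim(T/P) \geq 1$; when $n = 1$ the chain is simply $P \subset M$ and there is nothing to check, so assume $n \geq 2$. Write $\Min T = \{P = P_0, P_1, \dots, P_m\}$, which is finite because $T$ is Noetherian, and recall that $\Ass T$ is finite as well. I would set $Q_0 := P$ and construct $Q_1, \dots, Q_{n-1}$ recursively so that, for each $i$, the prime $Q_i$ satisfies $Q_{i-1} \subset Q_i$ with $\height(Q_i/Q_{i-1}) = 1$, $\dim(T/Q_i) = n - i$, $Q_i \notin \Ass T$ (for $i \geq 1$), and $P$ is the only minimal prime of $T$ contained in $Q_i$.

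Suppose $Q_i$ has been chosen with $e := \dim(T/Q_i) = n - i \geq 2$ and with $P$ the unique minimal prime below it. Pass to the Noetherian local domain $\bar T := T/Q_i$ with maximal ideal $\bar M := M/Q_i$, so $\dim \bar T = e \geq 2$. The conditions to be imposed on the next prime translate into finitely many forbidden height-one primes of $\bar T$: let $\mathcal F$ consist of (i) the height-one primes of $\bar T$ minimal over $(P_j + Q_i)/Q_i$ for some $j \geq 1$, and (ii) the height-one primes of $\bar T$ of the form $A/Q_i$ with $A \in \Ass T$. Each ideal $(P_j + Q_i)/Q_i$ is nonzero (since $P_j \not\subseteq Q_i$ by the inductive hypothesis) and so has only finitely many minimal primes, and $\Ass T$ is finite; hence $\mathcal F$ is finite.

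Because $e \geq 2$, every member of $\mathcal F$ is properly contained in $\bar M$, so by prime avoidance $\bar M \not\subseteq \bigcup_{\bar P \in \mathcal F} \bar P$; choose $y \in \bar M$ lying in none of these primes, necessarily $y \neq 0$. Since $\bar T$ is a domain, $y$ is a nonzerodivisor and a non-unit, so $\dim(\bar T/(y)) = e - 1$, and therefore some minimal prime $\bar Q$ of $(y)$ satisfies $\dim(\bar T/\bar Q) = e - 1$; by Krull's principal ideal theorem $\bar Q$ has height $1$. Let $Q_{i+1}$ be the preimage of $\bar Q$ in $T$. Then $\height(Q_{i+1}/Q_i) = 1$ and $\dim(T/Q_{i+1}) = e - 1 = n - (i+1)$. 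Moreover $y \in \bar Q$ while $y$ avoids every prime in $\mathcal F$, so $\bar Q \notin \mathcal F$; by construction this forces $Q_{i+1} \notin \Ass T$ and $(P_j + Q_i)/Q_i \not\subseteq \bar Q$ for all $j \geq 1$, i.e.\ $P_j \not\subseteq Q_{i+1}$. Together with $P \subseteq Q_i \subset Q_{i+1}$, this shows $P$ remains the only minimal prime of $T$ contained in $Q_{i+1}$, closing the recursion.

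Iterating until $\dim(T/Q_{n-1}) = 1$ produces $Q_1, \dots, Q_{n-1}$ with all the desired properties, and since $T/Q_{n-1}$ is a one-dimensional local domain we have $\height(M/Q_{n-1}) = 1$, so appending $M$ yields a saturated chain $P \subset Q_1 \subset \dots \subset Q_{n-1} \subset M$ of length $n$. The step I expect to require the most care is enforcing the codimension condition and the two avoidance conditions simultaneously at each stage: the key point is that every minimal prime of a principal ideal $(y)$ automatically avoids $\mathcal F$ (it contains $y$, which the forbidden primes do not), while choosing $y$ to be a nonzerodivisor guarantees that at least one such minimal prime has the maximal possible coheight $e-1$, keeping the chain saturated. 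Verifying the dimension identity $\dim(\bar T/(y)) = e - 1$ for a nonzerodivisor $y$ and checking the finiteness of $\mathcal F$ are the routine pieces.
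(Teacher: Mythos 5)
Your proof is correct: the bottom-up inductive construction—avoiding the finitely many height-one "forbidden" primes of $T/Q_i$ by prime avoidance, then taking a minimal prime of a principal ideal $(y)$ of maximal coheight, with Krull's principal ideal theorem and the identity $\dim(\bar T/(y)) = \dim\bar T - 1$ for nonzero $y$ in a local domain keeping the chain saturated of the right length—does establish exactly the stated conclusion, the only cosmetic point being that when $\mathcal F$ is empty the phrase "necessarily $y \neq 0$" fails and you should instead simply choose $y \neq 0$ (possible since $\dim \bar T \geq 2$), or include the prime $(0)$ of the domain $\bar T$ in the avoidance set. Note that this paper does not prove the lemma itself (it is quoted from \cite{small}); your argument is essentially the same inductive prime-avoidance argument used in that source.
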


The main result from \cite{small} demonstrates conditions for exactly when a complete
local ring is the completion of a noncatenary local UFD. Because all
of the posets we will consider in this paper will be noncatenary, it
will be necessary to ensure that the complete local ring $T$ we begin
with satisfies these conditions.

\begin{theorem}\label{smallthm}
  \cstyle{\cite{small}, Theorem 3.7}
Let $(T,M)$ be a complete local ring. Then $T$ is the completion of a
noncatenary local UFD if and only if the following conditions hold:
\begin{enumerate}
\item No integer of $T$ is a zero divisor,
\item $\dep T > 1$, and
\item There exists $P \in \Min T$ such that $2 < \dim(T/P) < \dim T$.
\end{enumerate}
\end{theorem}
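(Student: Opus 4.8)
The plan is to observe that conditions (1) and (2) are precisely Heitmann's criterion from \cite{heitmann1993} for $T$ to be the completion of \emph{some} local UFD, so that the genuinely new content is the two-way link between the noncatenary hypothesis and condition (3). I would therefore prove the two implications separately, in each case refining the corresponding half of Heitmann's theorem.

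For the forward direction, suppose $A$ is a noncatenary local UFD with $\widehat A = T$. Since a local domain of dimension at most two is catenary, noncatenarity forces $\dim A \ge 3$, so $A$ is not a field and Heitmann's characterization yields (1) and (2) at once. To extract (3), I would first produce two maximal chains of different lengths: take two saturated chains of unequal length between some $x < y$ and extend each downward to $(0)$ and upward to $M \cap A$ (these endpoints are unique because $A$ is a local domain), obtaining maximal chains whose lengths still differ. Let $m$ be the length of a \emph{shortest} maximal chain; then $m < \dim A = \dim T$. The UFD property forces $m \ge 3$: the height-one prime $\mathfrak p_1$ is principal, say $\mathfrak p_1 = (f)$, and since $f$ is a nonzerodivisor, $A/(f)$ is a domain of dimension $\dim A - 1 \ge 2$, whose maximal ideal therefore has height at least two, ruling out $m \le 2$. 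Finally, lifting this short maximal chain through the faithfully flat map $A \to T$ to a chain running from a minimal prime $P$ of $T$ up to $M$, and using the correspondence between the lengths of maximal chains in $\Spec A$ and the values $\dim(T/P)$ for $P \in \Min T$, I obtain $\dim(T/P) = m$, so that $2 < \dim(T/P) < \dim T$.

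For the reverse direction, assume (1)--(3) and construct the ring. Using (3), fix $P \in \Min T$ with $2 < \dim(T/P) =: n < \dim T$, and apply Lemma~\ref{small2.8} to obtain a saturated chain $P \subset Q_1 \subset \cdots \subset Q_{n-1} \subset M$ that avoids $\Ass T$ and has $P$ as the unique minimal prime below each $Q_i$. I would then run Heitmann's transfinite construction from \cite{heitmann1993}: starting from a suitable N-subring and repeatedly adjoining transcendental elements by means of Lemma~\ref{loepp11}, I would build an ascending union $A$ of N-subrings which is a UFD with maximal ideal $M \cap A$, arranging at each stage both surjectivity onto $T/M^2$ and the intersection condition $IT \cap A = I$ for finitely generated ideals $I$, so that $A$ is Noetherian with $\widehat A = T$. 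The adjunctions are chosen so that the contracted chain $(0) \subset Q_1 \cap A \subset \cdots \subset Q_{n-1} \cap A \subset M \cap A$ stays strictly increasing and saturated in $\Spec A$, yielding a maximal chain of length $n$; since $\dim A = \dim T > n$ there is also a maximal chain of length $\dim T$, so $A$ is noncatenary.

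The hard part is the reverse direction, and specifically the simultaneous control of the formal fibers. One must adjoin enough transcendental elements to force the UFD property and to make the completion equal to $T$ (the intersection condition), while \emph{not} adjoining so much that the chain through $P$ either collapses (two consecutive $Q_i \cap A$ coinciding) or acquires a new prime strictly between consecutive terms; either failure destroys the length-$n$ maximal chain and hence the noncatenarity. Maintaining the height-one and associated-prime conditions of the N-subring definition throughout the recursion---so that Lemma~\ref{loepp11} continues to apply and the contracted chain remains saturated---is the crux of the argument.
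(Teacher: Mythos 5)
This statement is quoted by the paper from \cite{small}; the paper itself gives no proof, and the only thing it says about that proof (the paragraph following Theorem \ref{makeA}) already points to a mechanism different from yours. Your forward direction is largely sound: deducing (1) and (2) from Heitmann's characterization is legitimate (noncatenarity forces $\dim A \geq 3$, ruling out the field and DVR cases), and your argument that a local UFD of dimension at least $3$ has no maximal chain of length $\leq 2$ --- the height-one prime in such a chain is principal, say $fA$, and saturation of $fA \subset M \cap A$ would force $\dim(A/fA) = 1$, contradicting $\dim(A/fA) = \dim A - 1 \geq 2$ --- is correct, and is exactly what raises the bound from $1 < \dim(T/P)$ (the domain case) to $2 < \dim(T/P)$. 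The flaw is your last step: the equality $\dim(T/P) = m$ does \emph{not} follow from ``lifting through the faithfully flat map $A \to T$.'' Going-down lifts your maximal chain to a chain of length $m$ in $\Spec T$, but the lifted chain need not be saturated, so you get only $\dim(T/P) \geq m$, which is vacuous (it is consistent with $T$ being equidimensional). The correspondence you invoke is a genuine theorem going back to Ratliff's work on chains of prime ideals; you must either cite it or prove it, e.g.\ by induction on $m$: pass to $A/q_1$ with completion $T/q_1T$, note that every prime $Q$ of $T$ minimal over $q_1T$ satisfies $\height Q = 1$ and $Q \cap A = q_1$, choose $P \in \Min T$ with $P \subset Q$, and use catenarity of the complete local domain $T/P$ to conclude $\dim(T/P) = \dim(T/Q) + 1 = m$.

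The genuine gap is in the reverse direction, where you compress the entire difficulty into one sentence (``the adjunctions are chosen so that the contracted chain stays strictly increasing and saturated'') with no mechanism, and the mechanism you hint at --- adjoining transcendental elements sparingly so as not to collapse the chain or create intermediate primes --- is backwards. In these constructions one forces contractions to behave by adjoining \emph{more}, namely a generating set of the relevant prime(s) of $T$, so that $(Q \cap A)T = Q$; that single equation yields both distinctness and saturation of the contracted chain (a prime of $A$ strictly between $Q_i \cap A$ and $Q_{i+1} \cap A$ would produce, via going-down, a prime of $T$ strictly between $Q_i$ and $Q_{i+1}$), and carrying out such adjunctions while preserving the N-subring axioms is precisely the content of Lemma \ref{genset} of this paper. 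Moreover, for Theorem \ref{smallthm} no chain needs to be preserved at all: as the paper notes after Theorem \ref{makeA}, the proof in \cite{small} controls a \emph{single} prime $Q$, chosen via Lemma \ref{small2.8} inside the chain over the $P$ given by (3), with $\dim(T/Q) = 1$ and $\height Q < \dim T - 1$. Once $A$ contains a generating set of $Q$, one gets $\dim(A/(Q \cap A)) = \dim(T/Q) = 1$ (since $\widehat{A/(Q\cap A)} \cong T/Q$) and $\height(Q \cap A) \leq \height Q < \dim A - 1$ (by flatness), whence $\height(Q \cap A) + \dim(A/(Q \cap A)) < \dim A$; since a catenary local domain satisfies $\height \mathfrak{p} + \dim(A/\mathfrak{p}) = \dim A$ for every prime $\mathfrak{p}$, the ring $A$ is noncatenary, with no saturation bookkeeping whatsoever. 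Without the generating-set idea or some substitute, your outline does not close: nothing in Heitmann's construction by itself prevents consecutive contractions $Q_i \cap A$ from coinciding, or new primes of $A$ from appearing between them.
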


We finally present a theorem from algebraic geometry which
will be used to find a complete local ring with a specific
minimal prime ideal structure.

\begin{theorem}\label{bertini}
  \cstyle{\cite{hartshorne}, Theorem 8.18 (Bertini's Theorem)} Let $X$
  be a nonsingular closed subvariety of $\mathbb{P}^n_K$, where $K$ is
  an algebraically closed field. Then there exists a hyperplane
  $H \subseteq \mathbb{P}_K^n$, not containing $X$, and such that the
  scheme $H \cap X$ is regular. If $\dim X \geq 2$, then $H \cap X$ is
  connected, and therefore irreducible, as well. Furthermore, the set
  of hyperplanes with this property forms an open dense subset of the
  complete linear system $|H|$, considered as a projective space.
\end{theorem}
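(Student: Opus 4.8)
The plan is to work in the dual projective space $\check{\mathbb{P}}^n_K$, whose points parametrize the hyperplanes $H \subseteq \mathbb{P}^n_K$ (so that $\check{\mathbb{P}}^n_K$ is exactly the complete linear system $|H|$ in the statement), and to exhibit the hyperplanes that fail a given conclusion as a proper closed subset, whose complement is then the asserted open dense set. Write $r = \dim X$. The geometric input for regularity is the standard fact that, for $x \in X \cap H$, the scheme $H \cap X$ is singular at $x$ if and only if $H$ contains the embedded projective tangent space $T_x X$; since $X$ is nonsingular, $T_x X$ is an $r$-plane at every point.

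Accordingly, I would form the closed incidence variety $B = \{(x,H) \in X \times \check{\mathbb{P}}^n_K : T_x X \subseteq H\}$ and study its two projections. The fiber of $\pi_1 \colon B \to X$ over $x$ is the set of hyperplanes containing the fixed $r$-plane $T_x X$, which is a linear subspace of $\check{\mathbb{P}}^n_K$ of dimension $n - 1 - r$; hence $B$ is irreducible of dimension $r + (n - 1 - r) = n - 1$. Projecting by $\pi_2 \colon B \to \check{\mathbb{P}}^n_K$, the closed image $\overline{\pi_2(B)}$ has dimension at most $n - 1 < n$ and is therefore a proper closed subset. Discarding it together with the proper closed locus $\{H : X \subseteq H\}$, every surviving hyperplane $H$ meets $X$ in a regular scheme, and the surviving locus is open and dense because its complement is a proper closed subset of the irreducible variety $\check{\mathbb{P}}^n_K$.

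For the connectedness claim when $r \geq 2$, a dimension count no longer suffices, and here lies the main difficulty. I would consider instead the universal hyperplane section $\mathcal{X} = \{(x,H) \in X \times \check{\mathbb{P}}^n_K : x \in H\}$, whose fiber over $H$ under the second projection $q \colon \mathcal{X} \to \check{\mathbb{P}}^n_K$ is precisely $X \cap H$. Since the fiber of $\mathcal{X} \to X$ over $x$ is the $\mathbb{P}^{n-1}$ of hyperplanes through $x$, the total space $\mathcal{X}$ is irreducible. Applying Stein factorization, I would write $q = g \circ f$ with $f$ having connected fibers and $g \colon Y \to \check{\mathbb{P}}^n_K$ finite and dominant from an irreducible $Y$; the general fiber of $q$ is then connected exactly when $\deg g = 1$.

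The crux is to rule out $\deg g \geq 2$, and this is exactly where $\dim X \geq 2$ enters. For a general pencil (line) $\ell \subseteq \check{\mathbb{P}}^n_K$, the base locus $\bigcap_{H \in \ell}(X \cap H) = X \cap A$, with $A$ the codimension-two axis of the pencil, has dimension at least $r - 2 \geq 0$, so it is nonempty and is contained in every member of the pencil. I would use this common locus, together with the irreducibility of the total space of the pencil, to run a monodromy argument in the Stein factorization showing that the components of a general section cannot be globally separated, forcing $\deg g = 1$. Granting this, a general $X \cap H$ is connected; intersecting the two open dense loci from the regularity and connectedness arguments then yields hyperplanes for which $X \cap H$ is simultaneously regular and connected, hence irreducible. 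I expect this connectedness step --- bridging from ``any two hyperplane sections meet'' to ``the general section is connected'' --- to be the principal obstacle, since it is genuinely more than a dimension estimate and is precisely the part of the theorem that fails when $\dim X = 1$, where a general section is a disconnected set of $\deg X$ reduced points.
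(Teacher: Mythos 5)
The paper itself offers no proof of this statement: it is quoted from \cite{hartshorne} (Theorem II.8.18) as a preliminary and used as a black box in Lemma \ref{makeT}, so your proposal can only be measured against the standard proof, not an in-paper one. Your regularity half is correct and is essentially that standard argument: the pointwise criterion ($H \cap X$ singular at $x$ iff $T_xX \subseteq H$), the incidence variety $B$ whose fibers over $X$ are linear spaces of dimension $n-1-r$, the count $\dim B = n-1$, and the conclusion that the bad hyperplanes lie in the proper closed subset $\pi_2(B) \cup \{H : X \subseteq H\}$ of $\check{\mathbb{P}}^n_K$ (note $\pi_2(B)$ is already closed, since $B$ is projective, so no closure is needed), whose complement is open and dense.

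The genuine gap is in the connectedness half, and you flag it yourself: the passage ``I would use this common locus, together with the irreducibility of the total space of the pencil, to run a monodromy argument \ldots forcing $\deg g = 1$. Granting this \ldots'' leaves unproved exactly the assertion that \emph{is} the theorem when $\dim X \geq 2$. Two concrete problems with the plan as stated. First, irreducibility of the pencil's total space cannot by itself force degree one: its image in your Stein factor $Y$ is an irreducible curve finite over $\ell \cong \mathbb{P}^1$, and an irreducible curve can perfectly well be a degree-two cover of $\mathbb{P}^1$ (monodromy acting transitively on sheets is equivalent to irreducibility, not to connectedness of fibers). Second, in characteristic $p$ the finite map $g$ may be inseparable, so $\deg g$ is not the invariant that counts connected components of fibers; you would need its separable degree. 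Both issues are resolved by using the base locus not as input to a monodromy argument but to produce a \emph{section}: fix any pencil $\ell$ with axis $A$, let $W = q^{-1}(\ell)$, and take the Stein factorization $W \to C \to \ell$ of $W \to \ell$. Your dimension count shows $W$ is irreducible (every component of $W$ has dimension $\geq r$, while the locus over $X \cap A$ has dimension $\leq r-1$, so every component meets the open set where $W \to X$ is an isomorphism); hence $C$ is an irreducible curve, finite over $\ell$. Any point $z \in X \cap A$ (nonempty precisely because $\dim X \geq 2$) lies in every fiber, so $H \mapsto (z,H)$ is a section of $W \to \ell$, and composing with $W \to C$ gives a section of $C \to \ell$; its image is a one-dimensional closed irreducible subset of the irreducible curve $C$, hence all of $C$, forcing $C \to \ell$ to be bijective. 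Since the fibers of $W \to C$ are connected and nonempty, every fiber of $W \to \ell$, i.e.\ $X \cap H$ for \emph{every} $H \in \ell$, is connected. As every hyperplane lies in such a pencil, all hyperplane sections are connected --- stronger than the generic statement you aimed for --- and intersecting with your regularity locus finishes the theorem.
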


\section{Main Theorem}

We begin by constructing a complete local ring satisfying the
conditions of Theorem \ref{smallthm}, as well as other desired conditions. 
Note that
condition \ref{Q} of Lemma \ref{makeT} implies that no integer of $T$ is a zero divisor.  We then use a modified
version of the construction from the proof of that theorem to obtain a
local UFD with arbitrarily many disjoint chains of different lengths.

\begin{lemma}\label{makeT}
  Given any $k$-tuple $(n_1, n_2, \dots, n_k)$ of distinct positive
  integers all greater than 2, there exists a complete local ring $T$
  with minimal prime ideals $P_1, \dots, P_k$ such that
  \begin{enumerate}
  \item $\dim(T/P_i) = n_i$ for each $i = 1, \dots, k$,
  \item $T/P_i$ is a regular local ring for each $i = 1, \dots, k$,
  \item $\Min T = \Ass T = \{P_1, \ldots , P_k\}$,
  \item If $x \in T$ is not a zero divisor and $Q \in \Ass (T/xT)$ (considered as
  a subset of $\Spec T$), then $\height Q = 1$,
   \item $\dep T \geq 2$, and
  \item\label{Q} $T$ contains $\mathbb{Q}$.
  \end{enumerate}
\end{lemma}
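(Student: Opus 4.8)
The plan is to realize $T$ as the completion of the local ring at a closed point of an explicit algebraic variety over a carefully chosen field, and to read off the six conditions from the geometry. First I would fix $K$ to be an algebraically closed field of characteristic zero; since then $\Q \subseteq K$, any $T$ that is a $K$-algebra automatically satisfies condition \ref{Q}, disposing of (6). The reason to pass to an algebraically closed field is precisely so that Theorem \ref{bertini} (Bertini) applies: starting from a nonsingular irreducible subvariety of $\mathbb{P}^N_K$ of large dimension and taking iterated general hyperplane sections, Bertini produces nonsingular irreducible subvarieties of every intermediate dimension, and in particular nonsingular irreducible $V_1, \dots, V_k \subseteq \mathbb{P}^N_K$ with $\dim V_i = n_i$. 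The construction would arrange these varieties to share a common closed point $p$ and to meet in a controlled way, and then set $A = \mathcal{O}_{V,p}$ and $T = \widehat{A}$, where $V = V_1 \cup \cdots \cup V_k$.

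With this setup, conditions (1), (2), and (3) should follow essentially formally. The minimal primes of $T$ are the primes $P_i$ cut out by the branches $V_i$ through $p$; since $p$ is a nonsingular point of each $V_i$, each $T/P_i$ is the completion of a regular local ring and hence is itself regular of dimension $n_i$, giving (1) and (2). Because $V$ is a union of reduced varieties it is reduced, so $T$ is reduced, has no embedded primes, and therefore $\Min T = \Ass T = \{P_1, \dots, P_k\}$, which is (3). Thus the entire difficulty is concentrated in conditions (4) and (5).

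The hard part is arranging conditions (4) and (5) simultaneously, and this is where the \emph{placement} of the $V_i$ (rather than their mere existence) matters. Condition (4) is equivalent to demanding that every prime $Q$ of height at least $2$ satisfy $\depth T_Q \geq 2$: indeed, if some such $Q$ had $\depth T_Q = 1$, then for a nonzerodivisor $x \in Q$ one computes $\depth(T_Q/xT_Q) = 0$, so $Q \in \Ass(T/xT)$ with $\height Q \geq 2$, violating (4). This is a Serre-type $(S_2)$ condition, and it is in genuine tension with the hypothesis that the $n_i$ are \emph{distinct}: at the generic point of a locus where two components of different dimension meet, the local ring is a gluing of regular rings whose punctured spectrum can become disconnected (forcing depth $1$ by the Hartshorne–Grothendieck connectedness theorem), which would break (4). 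The resolution is to use the freedom in Bertini's theorem to position the $V_i$ so that every prime lying on two components of different dimension in fact lies on a positive-dimensional locus bridging them, keeping the relevant local punctured spectra connected; the same positive-dimensional gluing is what forces the global punctured spectrum to be connected and hence secures $\depth T \geq 2$, i.e. condition (5). I expect the main work of the proof to be exactly this compatibility: choosing the intersection pattern of the $V_i$ so that the local depth estimates hold at every height-$\geq 2$ prime while the components remain glued along loci of large enough dimension. Verifying that the generic choices guaranteed by Bertini realize such a configuration, and that no unexpected associated prime of $T/xT$ of height exceeding $1$ survives, is the step I anticipate being the crux.
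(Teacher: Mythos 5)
Your diagnosis of where the difficulty lies is exactly right --- conditions (1), (2), (3), (6) are indeed formal, and (4)/(5) are the crux --- but the fix you propose cannot be carried out, because the tension you identified is not resolvable by \emph{any} placement of the components. Indeed, conditions (1), (3), and (4) are mutually inconsistent once $k \geq 2$ and the $n_i$ are distinct. To see this, let $T$ be any complete local ring with $\Ass T = \Min T = \{P_1, \dots, P_k\}$ satisfying (4), fix $i$, and let $Q$ be a prime minimal over $P_i + \bigcap_{l \neq i} P_l$ (such a $Q$ exists since this ideal is contained in $M$). By minimality of $Q$, the only prime of $T_Q$ lying on both $V(P_i T_Q)$ and $\bigcup_{l \neq i} V(P_l T_Q)$ is $QT_Q$ itself, so the punctured spectrum of $T_Q$ is covered by two disjoint nonempty closed sets: it is disconnected \emph{no matter how the components were positioned} --- at the generic point of the intersection locus there is nothing left to do any bridging. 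Hartshorne's connectedness theorem gives $\depth T_Q \leq 1$, and since $Q \notin \Ass T$ we may pick $x \in Q$ avoiding every $P_l$; then $x$ is a nonzerodivisor of $T$ and of $T_Q$, $\depth T_Q = 1$, hence $\depth (T_Q/xT_Q) = 0$, i.e.\ $QT_Q \in \Ass(T_Q/xT_Q)$ and so $Q \in \Ass(T/xT)$. Condition (4) now forces $\height Q = 1$, hence $\height(Q/P_l) = 1$ for every minimal prime $P_l \subseteq Q$; since each $T/P_l$ is a complete local domain (so catenary, with $\height(Q/P_l) + \dim T/Q = n_l$), this gives $n_i = \dim T/Q + 1 = n_l$ for some $l \neq i$, a contradiction. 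In other words, your condition (4) is exactly Serre's $(S_2)$ in the presence of (3), and a catenary local $(S_2)$ ring must be equidimensional, which (1) forbids.

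The uncomfortable conclusion is that the statement you were asked to prove is false for every $k \geq 2$, and the paper's own proof breaks at exactly the point you flagged. The paper takes generic iterated Bertini sections (cones completed at the vertex, so the common point you wanted comes for free) and deduces (4) by claiming that if $Q \in \Ass(T/xT)$ and $P_i \subseteq Q$ is minimal, then the image $\overline{Q}$ of $Q$ in $\overline{T} = T/P_i$ lies in $\Ass(\overline{T}/x\overline{T})$, whence $\height(Q/P_i) \leq 1$ because the regular ring $\overline{T}$ is $(S_2)$. That implication is false. Concretely, take $T = \C\llb x_0,\dots,x_4\rrb/\bigl((x_2) \cap (x_3,x_4)\bigr)$, a transverse union of a $4$-plane and a $3$-plane --- up to a linear change of coordinates this is what the construction produces for $(n_1,n_2) = (3,4)$ --- and let $x = x_2 + x_3$. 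Then $x$ is a nonzerodivisor, and $Q = (x_2,x_3,x_4)T$ is an embedded associated prime of $T/xT$ of height $2$, while the image of $Q$ in $T/(x_2)T \cong \C\llb x_0,x_1,x_3,x_4\rrb$ is $(x_3,x_4)$, which is not associated to $\C\llb x_0,x_1,x_3,x_4\rrb/(x_3)$. So neither your bridging idea nor the paper's quotient argument can establish (4); any genuine repair must weaken (4) to something compatible with non-equidimensionality --- for instance, control only of $\height(P \cap R)$ for $P \in \Ass(T/rT)$ with $r$ ranging over the N-subrings actually constructed, which is what the N-subring axioms and the machinery of Heitmann and of Avery et al.\ are designed to exploit --- rather than attempt to impose $(S_2)$ on a ring that (1) and (3) force to be non-equidimensional.
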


\begin{proof}
  Without loss of generality, suppose $n_1 < n_2 < \dots < n_k$. Let
  $R = \C[x_0, \dots, x_n]$, where $n = n_k$.  By Bertini's theorem,
  there exists an open collection of hypersurfaces  
  in 
  $\mathbb{P}^{n}$ such that, if $H$ is an element of the open collection,
  then $H$ is a smooth, irreducible subvariety.
  To obtain a
  subvariety of dimension $n - j$ we apply the theorem $j + 1$ times to
  obtain smooth irreducible subvarieties
  $H_0 \supset (H_0 \cap H_1) \supset \dots \supset (H_0 \cap \dots
  \cap H_j) = H$. We then let $Q$ be the ideal corresponding to $H$;
  that is, $Q = I(H) \in \Spec R$. By Bertini's Theorem, $H$ is
  irreducible, hence $Q$ is a prime ideal. Furthermore,
  $\dim R/Q = n - j$. Because all of these varieties are smooth, for each
  resultant prime ideal $Q$, the ring $R/Q$ is regular.
  
  We apply this result to obtain prime ideals
  $Q_1, \dots, Q_k$ such that $\dim R/Q_i = n_i$ and $R/Q_i$ is regular for each $i$. To
  ensure that they are all incomparable, note that $Q_i \subseteq Q_j$
  if and only if $V(Q_j) \subseteq V(Q_i)$. Therefore, we need only
  choose varieties that are not contained in one another, which is
  possible since the set of candidates is an open dense set,
  and containment is a closed
  condition.

  Now let $I = \bigcap_{i=1}^k Q_i$ and let
  $S = \widehat{R} \cong \C\llb x_0, \dots, x_{n}\rrb$, where $\widehat{R}$ denotes the
  completion of $R$ with respect to the maximal ideal $(x_0, \ldots ,x_n)$. The desired ring
  is given by $T = S/IS$, where 
  $P_i = Q_i S/IS$ (we will see shortly that this is in fact a prime
  ideal of $T$).  It is clear that 
  $T$ contains $\mathbb{Q}$.  
  
  To see that $T$ satisfies the desired properties, observe first that
  $Q_1 \cap Q_2 \cap \dots \cap Q_k$ is a minimal primary
  decomposition of the ideal $I$ of $R$.   
  Now, $T/P_i$ is a
  regular local ring as a result of the fact that the completion of a
  regular local ring is itself a regular local ring, and
  \[ \widehat{(R/Q_i)} \cong S/Q_iS\cong
    \frac{S/IS}{Q_iS/IS} \cong T/P_i.\]
  This also establishes that $P_i$ is a prime ideal (since a regular
  local ring is a domain) and that $\dim(T/P_i) = \dim(R/Q_i) = n_i$.
  
  Since $Q_i$ is a prime ideal of $R$ for every $i = 1,2, \ldots k$,
  we have
  $\sqrt{I} = I$ and so $R/I$ is reduced.  Since $R$ is excellent, $R/I$ is 
  excellent and so its completion, $S/IS$, is also reduced.  It follows
  that $T = S/IS$ has no embedded associated prime ideals, and so
  $\Min T = \Ass T$.  
  
  We now show that $\Min T = \{P_1,P_2, \ldots ,P_k\}$.  Suppose $P$ is 
  a prime ideal of $S$ such that $IS \subseteq P \subseteq Q_iS$.
Then, intersecting with $R$, we have
$I \subseteq (P \cap R) \subseteq Q_i$.  Since $Q_i$ is minimal over $I$,
$P \cap R = Q_i$.  Now, $(P \cap R)S \subseteq P \subseteq Q_iS$.
It follows that $Q_iS \subseteq P \subseteq Q_iS$
and so $P = Q_iS$.  Hence, $P_i \in \Min T$ for all $i = 1,2, \ldots ,k$.
On the other hand, let $P$ be a minimal prime ideal
  over $IS$.  Then, intersecting with $R$, we have $I \subseteq P \cap R$.
  Since $\Ass (R/IR) = \Min (R/IR) = \{Q_1, Q_2, \ldots , Q_k\}$, it must
  be the case that $I \subseteq Q_i \subseteq P \cap R$ for some $i$.
  It follows that $IS \subseteq Q_iS \subseteq (P \cap R)S \subseteq P$ and so $P = Q_iS$.
  Therefore, $\Min T = \{P_1,P_2, \ldots ,P_k\}$.

   Any regular ring $A$ satisfies Serre's criterion $(S_2)$ (by
  \cite{matsumura}, Theorem 23.8), which implies that if
  $P \in \Ass(A/rA)$, for any regular $r \in A$, then
  $\height P \leq 1$. Suppose $x \in T$ is not a zerodivisor and
  let $Q \in \Ass (T/xT)$.  Let $P_i$ be a minimal prime ideal of $T$
  contained in $Q$. Define $\overline{T} = T/P_i$ and denote the image of
  $Q$ in $\overline T$ by $\overline Q$. Since $Q \in \Ass(T/xT)$, by
  \cite{sharp}, Remark 4.23, we have that
  $\overline Q \in \Ass(\overline T / x \overline T)$. Therefore
  $\height Q/P_i \leq 1$. Since this is true for each minimal prime
  ideal $P_i$ of $T$ such that $P_i \subseteq Q$, we must therefore
  have $\height Q = 1$ as desired.
  
  To see that $\depth T > 1$, we will find a $T$-regular
  sequence of length 2. We start by observing that, if $M$ is the maximal
  ideal of $T$ then $M \notin \Ass T$,
  and so there exists $x \in M$ which is not a zero divisor. If $Q \in \Ass T/xT$, then
  $\height Q = 1$. However, by construction, $\dim T \geq 2$, so any
  height two prime ideal of $T$ will contain a $T/xT$-regular element,
  call it $y$. Then $x,y$ is a regular sequence of length 2 as
  desired.
\end{proof}

The following lemma will afford us a high degree of control over the
height one elements we choose for our chains of prime ideals in the main
theorem. 

\begin{lemma}\label{uncollapse}
  Let $(A,M)$ be a local UFD with $\dim A > 2$ and let $k$ be a
  positive integer. Suppose that $A$ contains maximal chains
  of prime ideals
  \[ (0) \subset P \subset P_{i,2} \subset \dots \subset P_{i,n_i-1}
    \subset M\] for $i = 1, \dots, k$, where $P_{a,2} \neq P_{b,2}$
  whenever $a \neq b$. Then there exist height one prime ideals
  $\tilde P_1, \dots, \tilde P_k \in \Spec A$ such that, for
  $i = 1, \dots, k$, $\tilde P_i \subset P_{i,2}$ and
  $\tilde P_a \neq \tilde P_b$ whenever $a \neq b$.
\end{lemma}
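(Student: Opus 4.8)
The plan is to exploit that $A$ is a UFD, so that the height one primes of $A$ are exactly the principal primes $(\pi)$ for $\pi$ an irreducible (equivalently, prime) element. Since each displayed chain is maximal, hence saturated, the initial segment $(0) \subset P \subset P_{i,2}$ forces $\height P_{i,2} = 2$. The strategy is then to show that each $P_{i,2}$ contains infinitely many distinct height one primes of $A$, and to choose the $\tilde P_i$ one at a time, at each stage avoiding the finitely many already selected.

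The crux is the following fact: a Noetherian local UFD of dimension at least two has infinitely many height one prime ideals. I would prove this by contradiction. Suppose such a ring, with maximal ideal $\mathfrak m$, had only finitely many height one primes, necessarily principal, say $(p_1), \dots, (p_m)$. Because the dimension is at least two, $\height \mathfrak m \geq 2$, so $\mathfrak m \not\subseteq (p_j)$ for every $j$; by prime avoidance $\mathfrak m \not\subseteq \bigcup_{j=1}^m (p_j)$, so there is a nonzero nonunit $x \in \mathfrak m$ divisible by none of the $p_j$ (it is nonzero since $0$ lies in each $(p_j)$). But factoring $x$ into irreducibles expresses it as a unit times a product of the $p_j$ up to associates, so some $p_j$ divides $x$, a contradiction.

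To apply this to $A$, I would pass to the localization $A_{P_{i,2}}$, which is again a UFD and is Noetherian local of dimension $\height P_{i,2} = 2$. By the fact above it has infinitely many height one primes, and these correspond bijectively, via contraction, to the height one primes of $A$ contained in $P_{i,2}$ (localization preserves inclusions and heights). Hence each $P_{i,2}$ contains infinitely many height one primes of $A$.

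Finally, I would make the selection greedily. Having chosen distinct height one primes $\tilde P_1, \dots, \tilde P_{i-1}$ with $\tilde P_j \subset P_{j,2}$, I choose $\tilde P_i$ to be any height one prime contained in $P_{i,2}$ different from the fewer than $k$ primes already selected; this is possible since $P_{i,2}$ contains infinitely many such primes. The resulting $\tilde P_1, \dots, \tilde P_k$ are then pairwise distinct and satisfy $\tilde P_i \subset P_{i,2}$, as required. The only genuine obstacle is the infinitude claim; once it is in hand the selection is immediate. Notably, the hypothesis $P_{a,2} \neq P_{b,2}$ is not strictly needed for this argument, since distinctness of the $\tilde P_i$ is arranged directly during the selection.
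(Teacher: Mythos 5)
Your proof is correct, but it takes a genuinely different route from the paper's. The paper's argument is a short direct construction: since the chains are saturated and the $P_{j,2}$ are distinct, no $P_{i,2}$ can be contained in another, so prime avoidance yields $z_i \in P_{i,2}$ with $z_i \notin P_{j,2}$ for $j \neq i$; one then takes an irreducible factor $q_i$ of $z_i$ lying in $P_{i,2}$ and sets $\tilde P_i = q_i A$, with distinctness automatic because $q_i A \not\subseteq P_{b,2}$ for $b \neq i$. You instead prove that each $P_{i,2}$ contains infinitely many height one primes of $A$ --- by localizing at $P_{i,2}$ and showing that a Noetherian local UFD of dimension at least two has infinitely many principal height one primes --- and then choose the $\tilde P_i$ greedily. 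Your route is longer but buys a slightly more general statement: as you observe, it does not use the hypothesis $P_{a,2} \neq P_{b,2}$ at all, whereas the paper's prime-avoidance step needs it. Conversely, the paper's construction yields the extra separation property $\tilde P_i \not\subseteq P_{b,2}$ for $b \neq i$, which your greedy selection does not provide. One caveat: your passing claim that saturation of $(0) \subset P \subset P_{i,2}$ forces $\height P_{i,2} = 2$ is unjustified --- in a noncatenary ring (and the UFDs constructed in this paper are precisely noncatenary) a saturated chain does not pin down the height of its top element. This is harmless here, since saturation does give $\height P_{i,2} \geq 2$, and that is all your argument actually uses.
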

\begin{proof}
  
  By the Prime Avoidance
  Theorem, there exists $z_i \in P_{i,2}$ such
  that $z_i \notin P_{j,2}$ for $j \neq i$. Let $q_i$ be an
  irreducible factor of $z_i$ that is in $P_{i,2}$.  Then
  $q_i A \subset P_{i,2}$ but $q_i A \not\subset P_{b,2}$ for
  $b \neq i$.  Now set $\tilde P_i = q_i A$. Then $\tilde P_1, \dots, \tilde P_k \in \Spec A$
  are height one prime ideals of $A$ and 
  $\tilde P_a \neq \tilde P_b$ when $a \neq b$.
\end{proof}

The following technical lemma will prove instrumental in the proof of the
main theorem, and, along with Lemma \ref{uncollapse}, is among the
primary techniques that allow us to have some control of the prime ideal structure
of the resultant UFD.

\begin{lemma}\label{genset}
  Let $(T,M)$ be a complete local ring as constructed in Lemma
  \ref{makeT}. Furthermore, let $Q \in \Spec T$ be such that
  $\height Q > 1$ and let $R$ be a countable N-subring of $T$.

  Then there exists a countable N-subring $S$ of $T$ such that $R \subset S$
  and $S$ contains a generating set for $Q$.
\end{lemma}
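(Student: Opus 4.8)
The plan is to adjoin to $R$ the finitely many generators of $Q$ one at a time, using Lemma \ref{loepp11} repeatedly, so that the resulting ring $S$ is an N-subring containing a generating set for $Q$. Since $T$ is Noetherian, $Q$ is finitely generated, say $Q = (q_1, \dots, q_m)T$. If I could simply run Lemma \ref{loepp11} with $x = q_j$ at each stage, I would be done; the difficulty is that Lemma \ref{loepp11} requires the adjoined element $x$ to satisfy $x \notin P$ and $x + P$ \emph{transcendental} over $R/(R \cap P)$ for every $P$ in a suitable countable set $C$. A generator $q_j$ of $Q$ will in general fail this—most importantly, $q_j \in Q$ itself, so $q_j$ is certainly not transcendental modulo any $P \supseteq Q$, and even for smaller primes $q_j + P$ may be algebraic over the image of $R$. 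So the generators cannot be adjoined directly.

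The key idea is to adjoin each generator only after first \emph{perturbing} it by a transcendental element. First I would assemble the countable set $C \subset \Spec T$ required by Lemma \ref{loepp11}: it must contain $\Ass T$, all associated primes of $T/rT$ for $0 \neq r \in R$ (a countable union of finite sets, since $R$ is countable and $T$ is Noetherian), and must exclude $M$. Next, for each generator $q_j$ I would choose an element $x_j \in T$ such that $x_j + P$ is transcendental over $R/(R \cap P)$ for all $P \in C$, and then adjoin $x_j$ via Lemma \ref{loepp11}. The existence of such an $x_j$ is exactly the content of the transcendence-building arguments underlying that lemma: using the prime-avoidance Lemmas \ref{heitmann2} and \ref{heitmann3} together with the cardinality estimate in Lemma \ref{cardinality} (which gives $|T/P| \geq |\R|$ for nonmaximal $P$, far exceeding the countably many algebraic constraints imposed by a countable $R$), one can find an element avoiding all the countably many ``algebraic'' cosets. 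The crucial trick is then to set the adjoined generator as $q_j + x_j \cdot t$ for a suitable regular $t$, or more simply to adjoin the transcendental $x_j$ and then note that $q_j = (q_j + x_j) - x_j$ lies in $R[x_j, \dots]$; that is, by adjoining the transcendentals $x_1, \dots, x_m$ and also the differences $q_j + x_j$ (each of which I must again verify is adjoinable via Lemma \ref{loepp11}), each $q_j$ becomes expressible as a difference of two adjoined elements and so lies in the final ring.

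Carrying this out, I would build a chain $R = S_0 \subset S_1 \subset \cdots \subset S_{2m} = S$, where at each step I verify that the element being adjoined satisfies the transcendence hypothesis of Lemma \ref{loepp11} relative to the set $C$ rebuilt for the current subring. Since Lemma \ref{loepp11} guarantees that each $S_i$ is again a countable N-subring with $|S_i| = \sup\{\aleph_0, |S_{i-1}|\}$, the final ring $S$ is a countable N-subring properly containing $R$, and by construction every $q_j \in S$, so $S$ contains the generating set $\{q_1, \dots, q_m\}$ of $Q$.

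I expect the main obstacle to be the bookkeeping in the transcendence step: at each stage the set $C$ must be enlarged to account for the associated primes of $T/rT$ for the newly adjoined elements $r$, and one must confirm that the element to be adjoined next remains transcendental over the \emph{enlarged} subring modulo \emph{every} prime in the updated $C$. The cardinality bound from Lemma \ref{cardinality} is what makes this possible—the obstructions are countable while each residue field is uncountable—but verifying that the perturbed generators $q_j + x_j$ (not just the raw transcendentals $x_j$) satisfy the hypotheses, and that no generator accidentally lands in a prime it must avoid, is the delicate part of the argument.
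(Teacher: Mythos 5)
Your diagnosis of the obstacle is correct (the given generators cannot be adjoined directly, so they must be perturbed), but both concrete mechanisms you propose for getting a generating set into $S$ break down, and the gap is the same in each: you never arrange for the adjoined elements to \emph{lie in $Q$} and to \emph{still generate $Q$}. Consider first your ``difference trick'': adjoin a transcendental $x_j$, then adjoin $q_j + x_j$, so that $q_j = (q_j + x_j) - x_j \in S$. The second adjunction cannot be carried out by Lemma \ref{loepp11}: once $x_j$ lies in the current subring $S'$, the coset $(q_j + x_j) + P$ is transcendental over $S'/(S' \cap P)$ if and only if $q_j + P$ is, and $q_j$ is a fixed element with no freedom left to perturb. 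If, for instance, $q_j$ lies in a minimal prime $P \in \Ass T \subseteq C'$ (entirely possible --- in the application of this lemma, $Q = P_{i,j}$ contains a minimal prime of $T$, and a given generating set may well have generators inside it), then $(q_j + x_j) + P = x_j + P$ lies in the image of $S'$, hence is algebraic, and the hypothesis of Lemma \ref{loepp11} fails irreparably. Worse, the goal itself can be unattainable: an N-subring must satisfy $Q' \cap S = (0)$ for every $Q' \in \Ass T$, so if some original generator $q_j$ is a nonzero element of an associated prime, \emph{no} N-subring whatsoever contains $q_j$. Your other variant, adjoining $q_j + x_j \cdot t$, avoids that contradiction but has the dual defect: nothing guarantees $x_j t \in Q$, so the adjoined element need not lie in $Q$, and the final ring then contains no generating set of $Q$ at all. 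In short, your proposal replaces the given generating set of $Q$ with elements that are adjoinable but are not certified to generate $Q$, which is the substance of the lemma.

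The paper's proof supplies exactly the two missing ingredients. First, the perturbation is taken \emph{inside} $Q$: applying Lemma \ref{heitmann2} with $I = Q$, one finds $y_j \in Q$ with $y_j \notin P$ for all $P \in C_j$. This is precisely where the hypothesis $\height Q > 1$ is used --- by Lemma \ref{makeT}, every prime in $C_j$ has height at most one (associated primes of $T$ are minimal, and associated primes of $T/rT$ for regular $r$ have height one), so $Q \not\subseteq P$ for each $P \in C_j$; your write-up never invokes this hypothesis, which is a sign the mechanism is missing. The element actually adjoined is $\tilde q_j = q_j + \alpha_j y_j \in Q$, with $\alpha_j$ chosen by a second application of Lemma \ref{heitmann2} to avoid the countably many ``algebraic'' cosets, exactly as in your cardinality argument (here your use of Lemmas \ref{heitmann2} and \ref{cardinality} is on target). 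Second, one must verify that the perturbed elements still generate: writing $y_j = \beta_1 q_1 + \dots + \beta_n q_n$ and insisting $\alpha_j \in M$, the coefficient $1 + \alpha_j \beta_j$ is a unit, so $q_j \in (\tilde q_1, \dots, \tilde q_j, q_{j+1}, \dots, q_n)$, and inductively $Q = (\tilde q_1, \dots, \tilde q_n)$. The bookkeeping you flag (rebuilding $C_i$ at each stage) is handled just as you describe, but it is not the delicate part; the delicate part is the membership-in-$Q$ and regeneration argument your proposal lacks.
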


\begin{proof}
  Let $Q = (x_1, \dots, x_n)$. We will find elements
  $\tilde x_1, \dots, \tilde x_n \in Q$ such that
  $Q = (\tilde x_1, \dots, \tilde x_n)$ and such that we have a chain
  of N-subrings
  \[ R = R_0 \subset R_1 \subset R_2 \subset \dots \subset R_n = S\]
  where $R_i = R_{i-1}[\tilde x_i]_{M \cap R_{i-1}[\tilde x_i]}$
  for each $i = 1, \dots, n$.  

  We begin by finding an element $\tilde x_1 \in Q$ such that
  $R_1 = R_0[\tilde x_1]_{M \cap R_0[\tilde x_1]}$ is an N-subring and
  $Q = (\tilde x_1, x_2, \dots, x_n)$. We can accomplish this through
  the use of Lemma \ref{loepp11}, for which it will suffice to find an
  element $\tilde x \in Q$ such that $\tilde x_1 + P$ is
  transcendental over $R_0 / (R_0 \cap P)$ as an element of $T/P$ for
  all
  $P \in C_1 = \{P \in \Spec T : P \in \Ass(T/rT),\ 0 \neq r \in R_0\}
  \cup \Ass T$.

  First, by Lemma \ref{makeT}, $M \not\in C_1$ and $Q \not\subset P$ for all
  $P \in C_1$, so by Lemma \ref{heitmann2} there exists $y_1 \in Q$
  such that $y_1 \notin P$ for all $P \in C_1$. We will set
  $\tilde x_1 = x_1 + \alpha_1 y_1$, where $\alpha_1 \in T$ remains to
  be chosen.
  
  Fix some $P \in C_1$. Note that $|R_0 / (R_0 \cap P)| \leq |R_0|$,
  and $R_0$ is countable. As a result, the algebraic closure of
  $R_0 / (R_0 \cap P)$ in $T/P$ is countable. However, by Lemma
  \ref{cardinality}, $T/P$ is uncountable. Since we have chosen
  $y_1 \notin P$, every distinct choice of $t + P \in T/P$ gives a
  different $x_1 + ty_1 + P$. There are thus uncountably many such
  choices, only countably many of which make $x_1 + ty_1 + P$
  algebraic over $R_0 / (R_0 \cap P)$. In order to find a value of
  $\alpha_1$ such that $\tilde x_1 + P$ is transcendental over
  $R_0 / (R_0 \cap P)$ for all $P \in C_1$ simultaneously, we make use
  of another application of Lemma \ref{heitmann2}. For each
  $P \in C_1$, let $D_{(P)}$ be a set consisting of one element from
  each coset of $T/P$ which makes $x_1 + ty_1 + P$ algebraic over
  $R_0 / (P \cap R_0)$. Then set $D_1 = \bigcup_{P \in C_1}
  D_{(P)}$. We can now use Lemma \ref{heitmann2} to find
  $\alpha_1 \in M$ such that
  $\alpha_1 \notin \bigcup\{(r+P) : P \in C_1,\ r \in D_1\}$. This
  ensures that $x_1 + \alpha_1 y_1 + P$ is transcendental over
  $R_0 / (R_0 \cap P)$ for all $P \in C_1$, which was our goal. We can
  now set $\tilde x_1 = x_1 + \alpha_1 y_1$ and let
  $R_1 = R_0[\tilde x_1]_{M \cap R_0[\tilde x_1]}$.  Note that $R_1$ is
  countable.

  We must also check that $Q = (\tilde x_1, x_2, \dots, x_n)$. Since
  $y_1 \in Q$, we can write $y_1 = \beta_{1} x_1 + \dots +
  \beta_{n} x_n$ for some $\beta_{i} \in T$. Then we have
  \[ \tilde x_1 = x_1 + \alpha_1 y_1 = (1 + \alpha_1 \beta_{1}) x_1 +
    \alpha_1 \beta_{2} x_2 + \dots + \alpha_1 \beta_{n} x_n.\] Note
  that because $\alpha_1 \in M$, the element $1 + \alpha_1 \beta_{1}$
  is a unit. We can therefore write
  \[ x_1 = (1 + \alpha_1 \beta_{1})^{-1}(\tilde x_1 - \alpha_2
    \beta_{2} x_2 - \dots - \alpha_1 \beta_{n} x_n) \]
  and conclude that $x_1 \in (\tilde x_1, x_2, \dots, x_n)$. Therefore
  we indeed have that $Q = (\tilde x_1, x_2, \dots, x_n)$.

  We now outline the process to obtain $R_2$. Let
  \[C_2 = \{P \in \Spec T : P \in \Ass(T/rT),\ 0 \neq r \in R_1\}
  \cup \Ass T,\] which we note is a countable set. By Lemma
  \ref{makeT}, $M \not\in C_2$ and $Q \not\subset P$ for all $P \in C_2$. We can
  therefore use Lemma \ref{heitmann2} to find an element $y_2 \in Q$
  such that $y_2 \notin P$ for all $P \in C_2$. Now fix an element
  $P \in C_2$ and let $D_{(P)}$ be a set containing one element of
  each coset of $T/P$ that makes $x_2 + ty_2 + P$ algebraic over
  $R_1 / (P \cap R_1)$. Setting $D_2 = \bigcup_{P \in C_2} D_{(P)}$,
  we can find $\alpha_2 \in M$ such that
  $\tilde x_2 = x_2 + \alpha_2 y_2 + P$ is transcendental over
  $R_1/(P \cap R_1)$ for every $P \in C_2$. We can now let
  $R_2 = R_1[\tilde x_2]_{(M \cap R_1[\tilde x_2])}$. As above, $R_2$
  is a countable N-subring and $Q = (\tilde x_1, \tilde x_2, x_3, \dots, x_n)$.

  We continue with this procedure to find $R_3, \dots, R_n$, at each
  step setting
  $C_{i} = \{P \in \Spec T : P \in \Ass(T/rT),\ 0 \neq r \in R_{i-1}
  \} \cup \Ass T$, using Lemma \ref{heitmann2} to find $y_i \in Q$
  such that $y_i \notin P$ for all $P \in C_i$, and setting
  $D_{i} = \bigcup_{P \in C_i} D_{(P)}$, where $D_{(P)}$ is a set
  containing one element from each coset in $T/P$ that makes
  $x_i + ty_i + P$ algebraic over $R_{i-1} / (P \cap
  R_{i-1})$. Through another use of Lemma \ref{heitmann2}, we find
  $\alpha_i \in M$ such that $\tilde x_i = x_i + \alpha_i y_i + P$ is
  transcendental over $R_{i-1} / (P \cap R_{i-1})$ for every
  $P \in C_i$. We then set
  $R_i = R_{i-1}[\tilde x_i]_{M \cap R_{i-1}[\tilde x_i]}$.

  We proceed thus for $i = 3, \dots, n$ to obtain the desired chain of
  $N$-subrings $R = R_0 \subset R_1 \subset \dots \subset R_n =
  S$, completing the proof.
\end{proof}

We are now in a position to construct the desired UFD. As alluded to
before, the proof of Theorem \ref{makeA} is largely based on that of
Theorem \ref{smallthm} from \cite{small}, which in turn is based on
the proof of the main theorem in \cite{heitmann1993}.

\begin{theorem}\label{makeA}
  For integers $n_1, n_2, \dots, n_k$ with
  $2 < n_1 < n_2 < \dots < n_k$, there exists a local UFD $A$ such
  that $A$ has maximal chains of prime ideals of lengths
  $n_1, n_2, \dots, n_k$ which are disjoint except at their minimal
  and maximal elements.
\end{theorem}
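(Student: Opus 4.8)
The plan is to realize the prescribed chain data inside a complete local ring and then push it down to a UFD via the N-subring machinery. First I would take the ring $T$ furnished by Lemma \ref{makeT} for the tuple $(n_1,\dots,n_k)$; assuming $k\ge 2$ (the case $k=1$ is the standard existence of a UFD with a single chain of length $n_1$), the ring $T$ satisfies the hypotheses of Theorem \ref{smallthm}: no integer is a zero divisor since $\Q\subseteq T$, $\dep T\ge 2$, and $P_1\in\Min T$ has $2<n_1=\dim(T/P_1)<n_k=\dim T$. For each $i$ the ring $T/P_i$ is regular local of dimension $n_i$, so Lemma \ref{small2.8} produces a saturated chain $P_i=Q_{i,0}\subset Q_{i,1}\subset\dots\subset Q_{i,n_i}=M$ in which $P_i$ is the only minimal prime below each $Q_{i,j}$. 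Since $T/P_i$ is regular, hence catenary, and $P_i$ is the unique minimal prime below $Q_{i,j}$, we get $\height_T Q_{i,j}=j$, and two such chains with $i\neq i'$ meet only at $M$.

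Next I would build $A$ following the construction behind Theorem \ref{smallthm} (and \cite{heitmann1993}): $A$ is a directed union of countable N-subrings, produced by a transfinite induction that simultaneously accomplishes three families of tasks. The first forces $\widehat A=T$ and the Noetherianity of $A$ in the usual Heitmann fashion, by making the subring map onto $T/M^2$ and closing finitely generated ideals so that $IT\cap A=I$. The second, using Lemma \ref{genset}, adjoins to the current N-subring a generating set of each $Q_{i,j}$ with $j\ge 2$; every such prime has $\height_T Q_{i,j}=j>1$, so Lemma \ref{genset} applies and keeps us within the class of countable N-subrings. The third maintains the N-subring (hence UFD) property throughout, which Lemmas \ref{loepp11} and \ref{genset} guarantee at each step. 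The union $A$ is then a local UFD with $\widehat A=T$, and because every N-subring meets each $P_i$ in $(0)$ (N-subring condition 2), $A$ is a domain with $\dim A=\dim T=n_k$.

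Now I would read off the chains. Since $A\hookrightarrow T$ is faithfully flat, going-down gives $\height_A(Q_{i,j}\cap A)\le\height_T Q_{i,j}=j$. Because $A$ contains a generating set of $Q_{i,j}$ we have $(Q_{i,j}\cap A)T=Q_{i,j}$, so the contracted primes $Q_{i,j}\cap A$ (for $j\ge 2$, over all $i$) are pairwise distinct; moreover $Q_{i,2}\cap A$ cannot be principal, for otherwise $Q_{i,2}=(Q_{i,2}\cap A)T$ would be a principal prime of height $\le 1$ in $T$, and so, being a nonzero non-principal prime of the UFD $A$, it has height exactly two. Consequently there is a height-one prime $\mathfrak p_i$ of $A$ below $Q_{i,2}\cap A$, and $(0)\subset\mathfrak p_i\subset (Q_{i,2}\cap A)\subset\dots\subset(Q_{i,n_i}\cap A)=M\cap A$ is a strictly increasing chain of length $n_i$ from $(0)$ to $M\cap A$ in which $Q_{i,j}\cap A$ occupies position $j$; together with the bound $\height_A(Q_{i,j}\cap A)\le j$ this forces equality, so consecutive heights differ by one, the chain is saturated, and hence each $i$ yields a maximal chain of $A$ of length $n_i$. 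Since $\dim A=n_k$, the chain for $i=k$ attains the dimension while the others are strictly shorter, exhibiting the required noncatenary behaviour.

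Finally I would separate the chains. They currently agree at $(0)$ and $M\cap A$, and the contracted primes of height $\ge 2$ are already pairwise distinct (their extensions to $T$ are distinct), so only the height-one terms need adjusting. Applying Lemma \ref{uncollapse}, whose hypothesis is met because the $Q_{i,2}\cap A$ are distinct, replaces the height-one term of each chain by a principal prime $\tilde P_i=q_iA$ with $\tilde P_i\subset Q_{i,2}\cap A$ and $\tilde P_a\neq\tilde P_b$ for $a\neq b$, preserving saturation since $\tilde P_i$ has height one below the height-two prime $Q_{i,2}\cap A$. The resulting chains are maximal of lengths $n_1,\dots,n_k$ and pairwise disjoint except at $(0)$ and $M\cap A$. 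The principal obstacle is the second paragraph: orchestrating the transfinite construction so that incorporating generators of the $Q_{i,j}$ via Lemma \ref{genset} coexists with forcing $\widehat A=T$ and preserving the UFD structure in the limit, and verifying that $(Q_{i,j}\cap A)T=Q_{i,j}$ survives to the union. The height-one primes are the other sensitive point, since Lemma \ref{genset} cannot reach height one, which is precisely why the separate adjustment of Lemma \ref{uncollapse} is needed.
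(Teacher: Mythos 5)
Your proposal follows the paper's skeleton closely: Lemma \ref{makeT}, chains from Lemma \ref{small2.8}, repeated use of Lemma \ref{genset} to trap generating sets of all the height-$\geq 2$ chain primes inside a countable N-subring, Heitmann's construction to produce $A$ with $\widehat A \cong T$, and Lemma \ref{uncollapse} at the end. Two points of divergence are worth noting. First, the ``orchestration'' you flag as the principal obstacle is dispatched in the paper by citation rather than by a new transfinite argument: one builds the finite tower $R_0 \subset R_1 \subset \dots \subset R_N$ of countable N-subrings first, then simply runs Theorem 8 of \cite{heitmann1993} with $R_N$ in place of its initial N-subring; the resulting $A$ contains $R_N$, so $(P_{i,j} \cap A)T = P_{i,j}$ holds because $A$ contains the trapped generating sets. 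Second, your treatment of the height-one layer is a genuine simplification: you note that $Q_{i,2} \cap A$ is a nonzero, non-principal prime of the UFD $A$ (non-principal, since otherwise $Q_{i,2} = (Q_{i,2}\cap A)T$ would be a principal prime of height $2$, contradicting the principal ideal theorem), hence of height $\geq 2$, hence exactly $2$ by flatness, so height-one primes below it exist. The paper instead constructs primes $\tilde P_i \subset P_{i,2}$ of $T$ with $\tilde P_i \cap R_N = (0)$ and adjoins a common element $\tilde x \in \bigcap_i \tilde P_i$ so that $\tilde x A$ is a height-one prime below every $p_{i,2}$; that heavier machinery (including the counting argument via Lemma \ref{heitmann3}) is what additionally powers Corollary \ref{collapse}, but for the theorem itself your shortcut suffices. (A small caveat: Lemma \ref{uncollapse} as stated hypothesizes maximal chains through a \emph{common} height-one prime, which your chains need not have; this is harmless since its proof only uses that the $Q_{i,2} \cap A$ are pairwise incomparable, which holds here because each $Q_{i,2}$ contains a unique minimal prime of $T$.)

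There is, however, one genuine gap, and it sits exactly where the noncatenarity lives. You assert $\height_T Q_{i,j} = j$ for all $j \leq n_i$ and conclude that in $A$ ``consecutive heights differ by one, the chain is saturated.'' This fails at the top step for every $i < k$: the top element $Q_{i,n_i} = M$ has $\height_T M = \dim T = n_k$, and correspondingly $\height_A(M \cap A) = n_k$, so the final inclusion $(Q_{i,n_i-1} \cap A) \subset M \cap A$ exhibits a height jump of $n_k - n_i + 1 > 1$. No height-counting argument can certify saturation there; if it could, the chains could not have different lengths. Saturation of the last step must instead be pulled back from $T$: if $q \in \Spec A$ satisfied $(Q_{i,n_i-1} \cap A) \subset q \subset M \cap A$ (strict containments), then lying-over for the faithfully flat map $A \to T$ gives $Q \in \Spec T$ with $Q \cap A = q$; then $Q \supseteq qT \supseteq (Q_{i,n_i-1} \cap A)T = Q_{i,n_i-1}$, and $Q$ differs from both $Q_{i,n_i-1}$ and $M$ because its contraction does, while $Q \subset M$ since $T$ is local and $Q \neq M$. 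This contradicts the saturation of $Q_{i,n_i-1} \subset M$ in $T$ guaranteed by Lemma \ref{small2.8}. The same contraction argument handles all the interior steps as well, so the height bookkeeping can be dropped entirely; with this repair your proof goes through.
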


\begin{remark}
  As this proof is fairly long, we will preface it with a rough
  outline. We begin by using Lemma \ref{makeT} to find a complete
  local ring $(T,M)$ in which we can find chains of prime ideals of
  the desired lengths. Our goal will be to use the construction in
  \cite{heitmann1993} to create a UFD $A \subset T$ such that
  $\widehat A \cong T$. We would also like $A$ to contain a generating
  set of each prime ideal in the aforementioned chains in $\Spec T$,
  as this would guarantee that their intersections with $A$ are all
  distinct. This can only be done for prime ideals $P$ such that
  $\height P > 1$, however. We will deal with the case of the height one
  prime ideals separately.

  To find such an $A$, it suffices to create an N-subring $R_N$ which
  contains a generating set of each prime ideal $P$ in the specified
  chains with $\height P > 1$. This is accomplished through multiple
  applications of Lemma \ref{genset}.

  Finally, the height one prime ideals of the chains in $T$ must be
  dealt with. They are chosen such that their intersection with $R_N$
  is $(0)$, which will help us ensure that they will all have the same intersection
  with $A$. Then other prime ideals can be found which make the chains
  disjoint, using Lemma \ref{uncollapse}. This will complete the
  proof.
\end{remark}

\begin{proof}

  First, using Lemma \ref{makeT}, let $(T,M)$ be a complete local ring satisfying
  the six conditions of Lemma \ref{makeT}.  In particular, $T$ has
  minimal prime ideals $P_1, \dots, P_k$ such that
  $\dim(T/P_i) = n_i$ for each $i$.  Let $R_0$ be the prime subring of
  $T$ localized at its intersection with $M$ (note that
  $R_0 \cong \Q$) and let $C_0 = \Ass T = \Min T$. For each $P_i \in \Min T$,
  use Lemma \ref{small2.8} to construct a saturated chain of prime
  ideals
  $P_i = P_{i,0} \subset P_{i,1} \subset P_{i,2} \subset \dots \subset
  P_{i,n_i - 1} \subset M$ such that each ideal in the chain contains
  exactly one minimal prime ideal and none (except $P_{i,0}$) is an
  associated prime ideal of $T$. Note that the indices are chosen such
  that $\height P_{i,j} = j$.

  We will construct $A$ to be a local UFD such that $\widehat A \cong T$. We
  also want to ensure that, for $j > 1$,
  $(P_{i,j} \cap A)T = P_{i,j}$. It is enough to have $A$ contain a
  generating set of each of these ideals $P_{i,j}$. We accomplish this
  through successive applications of Lemma \ref{genset}. That is, we
  start with the N-subring $R_0$ and, say, the prime ideal
  $P_{1,2}$. By Lemma \ref{genset}, there exists a countable N-subring $R_1
  \supset R_0$ which contains a generating set for $P_{1,2}$. Applying
  this lemma in turn for each $P_{i,j}$ with $j > 1$ results in a
  chain of countable N-subrings
  \( R_0 \subset R_1 \subset \dots \subset R_N, \)
  where $R_N$ contains a generating set of each of the aforementioned
  prime ideals.
  
  We now focus on the height one prime ideals of our chains. We will
  find prime ideals $\tilde P_1, \dots, \tilde P_k \in \Spec T$ such
  that, for each $i = 1, \dots, k$, we have the following:
  \begin{enumerate}
  \item $P_{i,0} \subset \tilde P_i \subset P_{i,2}$,
  \item $\tilde P_i$ contains exactly one minimal prime ideal of $T$,
    and
  \item $\tilde P_i \cap R_N = (0)$.
  \end{enumerate}

  Fix some $i \in \{1, \dots, k\}$. We will show that, in fact, almost all
  prime ideals which satisfy (1) also satisfy (2) and (3). We begin by
  showing that there are uncountably many prime ideals that satisfy
  (1).

  Suppose to the contrary that, for some fixed $i = 1, \dots, k$, the
  set $B_i = \{p \in \Spec T : P_{i,0} \subset p \subset P_{i,2}\}$
  is countable. Let $\tilde T_i = T_{P_{i,2}} / P_{i,0}
  T_{P_{i,2}}$. The set $B_i$ is in one-to-one correspondence with the
  set
  $B'_i = \{p \in \Spec \tilde T : (0) \subset p \subset P_{i,2}
  \tilde T\}$, and so $B'_i$ is countable.  Hence, $|B_i'| < |\tilde{T}/P_{i,2} \tilde{T}|$, so we can use Lemma
  \ref{heitmann3} to conclude that
  $P_{i,2} \tilde T \not\subset \bigcup_{p \in B'_i} p$. But this is
  impossible, as every element in $P_{i,2} \tilde T$ is contained in
  some height one prime ideal of $\tilde T$, which implies that in fact
  $P_{i,2} \tilde T \subset \bigcup_{p \in B'_i} p$. As a result of
  this contradiction, we conclude that $B_i$ is uncountable, as
  desired.
  
  We now show that relatively few prime ideals satisfy (1) but not (2)
  and (3). Note that, given two distinct minimal prime ideals of a Noetherian ring, only finitely many height one prime ideals can contain them both.  As a result, all but
  finitely many prime ideals satisfying condition (1) also satisfy
  condition (2). As for condition (3), note that, because $R_N$ is a
  countable subring of $T$, at most countably many height one prime
  ideals of $T$ will have nonzero intersection with $R_N$. To see this
  observe that, if we have $P \in \Spec T$ with $\height P = 1$ and
  $0 \neq r \in P \cap R_N$, then $P \in \Ass T/rT$. As the set $\Ass T/rT$ is
  finite for each $r$, the set
  $\{P \in \Spec T : \height P = 1,\ P \cap R_N \neq (0)\}$ is
  countable. There are hence uncountably many prime ideals
  $\tilde P_i$ that satisfy conditions (1), (2), and (3).
  
  Let $\tilde P_1, \dots, \tilde P_k$ be fixed prime ideals which
  satisfy conditions (1), (2), and (3) above. We now apply Lemma
  \ref{heitmann2} with $I = \bigcap_{i=1}^k \tilde P_i$ and
  $C_{N+1} = \{ P \in \Spec T : P \in \Ass T/rT,\ r \in R_N\} \cup \Ass T$
  to conclude that there exists a nonzerodivisor $x \in I$ such that
  $x \notin \bigcup_{P \in C_{N+1}} P$. We now use a similar method
  to that in the proof of Lemma \ref{genset} to find an element
  $\tilde x \in T$ such that
  $R_{N+1} = R_N [\tilde x]_{(M \cap R_N[\tilde x])}$ is a countable
  N-subring. Fix $P \in C_{N+1}$ and note that different choices of
  $t + P \in T/P$ give different values of $x(1+t) + P$, since
  $x \notin P$. Since the algebraic closure of $R_N/(P \cap R_N)$ in
  $T/P$ is countable, for all but countably many choices of $t \in T$,
  the image of $x(1+t)$ in $T/P$ will be transcendental over
  $R_N / (P \cap R_N)$. Now let $D_{(P)}$ be a set containing one
  element of each coset $t+P$ such that $x(1+t)+P$ is algebraic over
  $R_N / (P \cap R_N)$ and let $D = \bigcup_{P \in C_{N+1}}
  D_{(P)}$. Using Lemma \ref{heitmann2}, we can find an element
  $\alpha \in T$ such that $\tilde x + P = x(1+\alpha) + P$ is
  transcendental over $R_N / (P \cap R_N)$ for every $P \in
  C_{N+1}$. Finally, by Lemma \ref{loepp11},
  $R_{N+1} = R_N[\tilde x]_{(M \cap R[\tilde x])}$ is a countable N-subring.
  Note that, by the transcendental property guaranteed above,
  $\tilde x R_{N+1}$ is a prime ideal of $R_{N+1}$. 

  At this point, we can use the construction in \cite{heitmann1993}, replacing
  $R_0$ in Theorem 8 of \cite{heitmann1993} with $R_{N + 1}$ from above, to
  obtain a UFD $A \subset T$ such that $\widehat A \cong T$ and
  $R_{N+1} \subset A$. Therefore, $A$ contains a generating set for
  each $P_{i,j}$ with $j > 1$ and, as a result, for each such ideal,
  $(P_{i,j} \cap A)T = P_{i,j}$. Therefore, letting
  $p_{i,j} = P_{i,j} \cap A$, for each $i = 1, \dots, k$ we have a
  chain of prime ideals
  $p_{i,2} \subset p_{i,3} \subset \dots \subset p_{i,n_{i-1}} \subset
  M \cap A$ in $A$. Furthermore, the construction guarantees that
  prime elements in $R_{N+1}$ remain prime in $A$. Therefore,
  $\tilde x A \in \Spec A$ and  $\tilde x A = \tilde P_i \cap A$ for each $i$.

  We claim that $A$ in fact contains disjoint saturated chains of
  prime ideals as desired. To show this, we need only find appropriate
  height one elements $\tilde p_i$ such that, the chains
  $(0) \subset \tilde p_i \subset p_{i,2} \subset \dots \subset
  p_{i,n_{i-1}} \subset M \cap A$ are disjoint (except of course at
  $(0)$ and $M \cap A$).

  Now, using Lemma \ref{uncollapse}, we find height one prime ideals
  $\tilde p_1, \dots, \tilde p_k \in \Spec A$ such that
  $\tilde p_i \subset p_{i,2}$ for each $i = 1, \dots, k$, and $\tilde p_i \neq \tilde p_j$ whenever $i \neq j$. The chains
  $(0) \subset \tilde p_i \subset p_{i,2} \subset \dots \subset
  p_{i,n_i-1} \subset M \cap A$ are therefore the desired maximal
  chains of prime ideals.
\end{proof}

The main difference between the proof of Theorem \ref{makeA} and that
of Theorem \ref{smallthm} from \cite{small} is that, in Theorem
\ref{smallthm}, it is enough to construct an N-subring containing a
generating set of just one carefully-chosen prime ideal $Q$, which has
the property that $\dim(T/Q) = 1$ and $\height Q < \dim T - 1$. This
ensures that the resulting UFD $A$ will be noncatenary, but it gives
us no further information about the structure of $\Spec A$. That we
can apply this technique multiple times and obtain more information
about $\Spec A$ is the primary insight of this result.

By virtue of Lemma \ref{uncollapse}, we have considerable control over
the height one elements of the chains: for instance, we can leave them
``collapsed,'' as they are before the application of the
lemma. Alternatively, we can choose to apply the lemma for only some
chains, leaving a few chains ``collapsed'' at height one and the rest
disjoint (see Figure \ref{345split} for an example).

\begin{cor} \label{collapse} Given distinct integers $n_1, n_2, \dots, n_k$
  such that $n_i > 2$ for each $i = 1, \dots, k$, there exists a local
  UFD $A$ with maximal chains of prime ideals of lengths
  $n_1, \dots, n_k$ which share a height one element but all of whose
  height two elements and above are disjoint.

  Furthermore, for any $\ell < k$, there are maximal chains of prime
  ideals of lengths $n_1, \dots, n_k$ such that the first $\ell$ share
  a single height one element and the remaining $k - \ell$ are disjoint.
  \end{cor}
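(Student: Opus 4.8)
The plan is to read both claims directly off the proof of Theorem \ref{makeA}, exploiting the observation that the chains it produces are already \emph{collapsed} at height one immediately before the final invocation of Lemma \ref{uncollapse}. After reordering the $n_i$ so that $2 < n_1 < \cdots < n_k$ (possible since they are distinct and exceed $2$), I would run the construction in the proof of Theorem \ref{makeA} verbatim up to, but not including, its last paragraph. This produces a local UFD $A$ with $\widehat A \cong T$, prime ideals $p_{i,j} = P_{i,j} \cap A$ forming chains $p_{i,2} \subset p_{i,3} \subset \cdots \subset p_{i,n_i-1} \subset M \cap A$ whose height $\geq 2$ terms are pairwise distinct (because $A$ contains a generating set of each $P_{i,j}$ with $j > 1$, so $(P_{i,j} \cap A)T = P_{i,j}$, and distinct $P_{i,j}$ therefore contract to distinct $p_{i,j}$), together with a single height one prime $\tilde x A \in \Spec A$ satisfying $\tilde x A = \tilde P_i \cap A$ for every $i$, whence $\tilde x A \subseteq p_{i,2}$ for every $i$.

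For the first statement I would stop here. The chains
\[ (0) \subset \tilde x A \subset p_{i,2} \subset \cdots \subset p_{i,n_i-1} \subset M \cap A, \qquad i = 1, \dots, k, \]
are maximal chains of the prescribed lengths $n_1, \dots, n_k$; they share the height one element $\tilde x A$, and all of their height two and higher terms are distinct by the generating-set argument above. This is precisely the ``collapsed'' configuration referred to in the discussion preceding the corollary, obtained simply by \emph{declining} to apply Lemma \ref{uncollapse}.

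For the second statement (where we may assume $\ell \geq 1$, the case $\ell = 0$ being Theorem \ref{makeA}) I would apply the prime avoidance technique inside the proof of Lemma \ref{uncollapse} \emph{selectively}, only to the last $k - \ell$ chains. That is, for each $i \in \{\ell+1, \dots, k\}$ I would use the Prime Avoidance Theorem to choose $z_i \in p_{i,2}$ with $z_i \notin p_{j,2}$ for every $j \neq i$ (ranging over \emph{all} of $1, \dots, k$, not merely the last block), take an irreducible factor $q_i$ of $z_i$ lying in $p_{i,2}$, and set $\tilde p_i = q_i A$, a height one prime with $\tilde p_i \subset p_{i,2}$. Keeping $\tilde x A$ as the common height one prime of the first $\ell$ chains then yields chains $(0) \subset \tilde x A \subset p_{i,2} \subset \cdots \subset M \cap A$ for $i \leq \ell$ and chains $(0) \subset \tilde p_i \subset p_{i,2} \subset \cdots \subset M \cap A$ for $i > \ell$.

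The one point requiring care, and the only genuine obstacle, is verifying that the newly chosen $\tilde p_i$ are distinct not merely from one another but also from the retained shared prime $\tilde x A$. Distinctness among the $\tilde p_i$ follows exactly as in Lemma \ref{uncollapse} from $q_i A \subseteq p_{i,2}$ and $q_i A \not\subseteq p_{j,2}$ for $j \neq i$. For separation from $\tilde x A$, I would fix any index $j \neq i$ (which exists since $\ell \geq 1$ forces $k \geq 2$) and use that $\tilde x A \subseteq p_{j,2}$ while $\tilde p_i = q_i A \not\subseteq p_{j,2}$, so $\tilde p_i \neq \tilde x A$. Since the height $\geq 2$ terms of every chain are automatically pairwise distinct regardless of the block structure, this shows the last $k - \ell$ chains are pairwise disjoint and disjoint from the first block except at $(0)$ and $M \cap A$, whereas the first $\ell$ chains share precisely the height one prime $\tilde x A$, completing the proof.
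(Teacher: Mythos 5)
Your proposal is correct and takes essentially the same route as the paper: the paper likewise obtains the first claim by running the construction of Theorem \ref{makeA} and simply declining the final application of Lemma \ref{uncollapse} (so every chain shares the height one prime $\tilde x A$), and obtains the second claim by applying Lemma \ref{uncollapse} to only $k-\ell$ of the chains. Your explicit check that each new $\tilde p_i$ is distinct from the retained prime $\tilde x A$ (via $\tilde x A \subseteq p_{j,2}$ but $q_i A \not\subseteq p_{j,2}$ for $j \neq i$) is a detail the paper's terse proof leaves implicit.
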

  
\begin{proof}
  Construct $A$ as in the proof of Theorem \ref{makeA}. Then, the chains
  $(0) \subset \tilde{x}A \subset p_{i,2} \subset \dots p_{i,n_i - 1}
  \subset M \cap A$ are disjoint except at their maximal, minimal and
  height one elements.  Finally, we note
  that by applying Lemma \ref{uncollapse} to only $k-\ell$ chains, we
  can ensure that the first $\ell$ share a height one element and the
  rest are disjoint.
\end{proof}

\section{Examples}

\begin{example}
  \begin{figure}[h]
    \centering
    \caption{A poset $X$ with maximal chains of lengths 4, 5, 6, and
      7, and the corresponding prime ideals in the rings $T$ and $A$
      from Lemma \ref{makeT} and Theorem \ref{makeA}, respectively.}
    \vspace{4ex}

\begin{tikzpicture}[scale=0.6]


\node (l0) at (-7,3) {$\bullet$};
\node (l1) at (-4,4) {$\bullet$};
\node (l2) at (-4,5) {$\bullet$};
\node (l3) at (-4,6) {$\bullet$};
\node (l4) at (-4,7) {$\bullet$};
\node (l5) at (-4,8) {$\bullet$};
\node (l6) at (-4,9) {$\bullet$};

\node (m11) at (-6,4) {$\bullet$};
\node (m12) at (-6,5) {$\bullet$};
\node (m13) at (-6,6) {$\bullet$};
\node (m14) at (-6,7) {$\bullet$};
\node (m15) at (-6,8) {$\bullet$};

\node (m22) at (-8,4) {$\bullet$};
\node (m23) at (-8,5) {$\bullet$};
\node (m24) at (-8,6) {$\bullet$};
\node (m25) at (-8,7) {$\bullet$};

\node (s1) at (-10,4) {$\bullet$};
\node (s2) at (-10,5) {$\bullet$};
\node (s3) at (-10,6) {$\bullet$};

\node (max) at (-7,10) {$\bullet$};
\node (poset) at (-7, 11) {$X$};

\draw (-7,3) -- (-4, 4) -- (-4, 5) -- (-4, 6) -- (-4, 7) -- (-4,
8) -- (-4, 9) -- (-7,10) -- (-6, 8) -- (-6,7) -- (-6,6) -- (-6,5) --
(-6,4) -- (-7,3) -- (-8, 4) -- (-8, 5) -- (-8, 6) -- (-8, 7) --
(-7,10) -- (-10, 6) -- (-10, 5) -- (-10, 4) -- (-7,3);


\node (l0) at (6,0)  {$P_{4,0}$};
\node (l1) at (6,2)  {$P_{4,1}$};
\node (l2) at (6,4)  {$P_{4,2}$};
\node (l3) at (6,6)  {$P_{4,3}$};
\node (l4) at (6,8)  {$P_{4,4}$};
\node (l5) at (6,10) {$P_{4,5}$};
\node (l6) at (6,12) {$P_{4,6}$};

\node (m10) at (4,0)  {$P_{3,0}$};
\node (m11) at (4,2)  {$P_{3,1}$};
\node (m12) at (4,4)  {$P_{3,2}$};
\node (m13) at (4,6)  {$P_{3,3}$};
\node (m14) at (4,8)  {$P_{3,4}$};
\node (m15) at (4,10) {$P_{3,5}$};

\node (m20) at (2,0) {$P_{2,0}$};
\node (m22) at (2,2) {$P_{2,1}$};
\node (m23) at (2,4) {$P_{2,2}$};
\node (m24) at (2,6) {$P_{2,3}$};
\node (m25) at (2,8) {$P_{2,4}$};

\node (s0) at (0,0) {$P_{1,0}$};
\node (s1) at (0,2) {$P_{1,1}$};
\node (s2) at (0,4) {$P_{1,2}$};
\node (s3) at (0,6) {$P_{1,3}$};

\node (max) at (3,14) {$M$};
\node (ring) at (3, 15) {$\Spec T$};

\draw (l0) -- (l1) -- (l2) -- (l3) -- (l4) -- (l5) -- (l6) -- (max);
\draw (max) -- (m15) -- (m14) -- (m13) -- (m12) -- (m11) -- (m10);
\draw (m20) -- (m22) -- (m23) -- (m24) -- (m25) -- (max);
\draw (max) -- (s3) -- (s2) -- (s1) -- (s0);

  
\node (l0) at (13,0) {$(0)$};
\node (l1) at (16,2)  {$\tilde p_4$};
\node (l2) at (16,4)  {$p_{4,2}$};
\node (l3) at (16,6)  {$p_{4,3}$};
\node (l4) at (16,8)  {$p_{4,4}$};
\node (l5) at (16,10) {$p_{4,5}$};
\node (l6) at (16,12) {$p_{4,6}$};

\node (m11) at (14,2) {$\tilde p_3$};
\node (m12) at (14,4) {$p_{3,2}$};
\node (m13) at (14,6) {$p_{3,3}$};
\node (m14) at (14,8) {$p_{3,4}$};
\node (m15) at (14,10) {$p_{3,5}$};

\node (m22) at (12,2) {$\tilde p_2$};
\node (m23) at (12,4) {$p_{2,2}$};
\node (m24) at (12,6) {$p_{2,3}$};
\node (m25) at (12,8) {$p_{2,4}$};

\node (s1) at (10,2) {$\tilde p_1$};
\node (s2) at (10,4) {$p_{1,2}$};
\node (s3) at (10,6) {$p_{1,3}$};

\node (max) at (13,14) {$M \cap A$};
\node (ring) at (13, 15) {$\Spec A$};

\draw (l0) -- (l1) -- (l2) -- (l3) -- (l4) -- (l5) -- (l6) -- (max) --
(m15) -- (m14) -- (m13) -- (m12) -- (m11) -- (l0) -- (m22) -- (m23) --
(m24) -- (m25) -- (max) -- (s3) -- (s2) -- (s1) -- (l0);
\end{tikzpicture}
    \label{4567example}
  \end{figure}
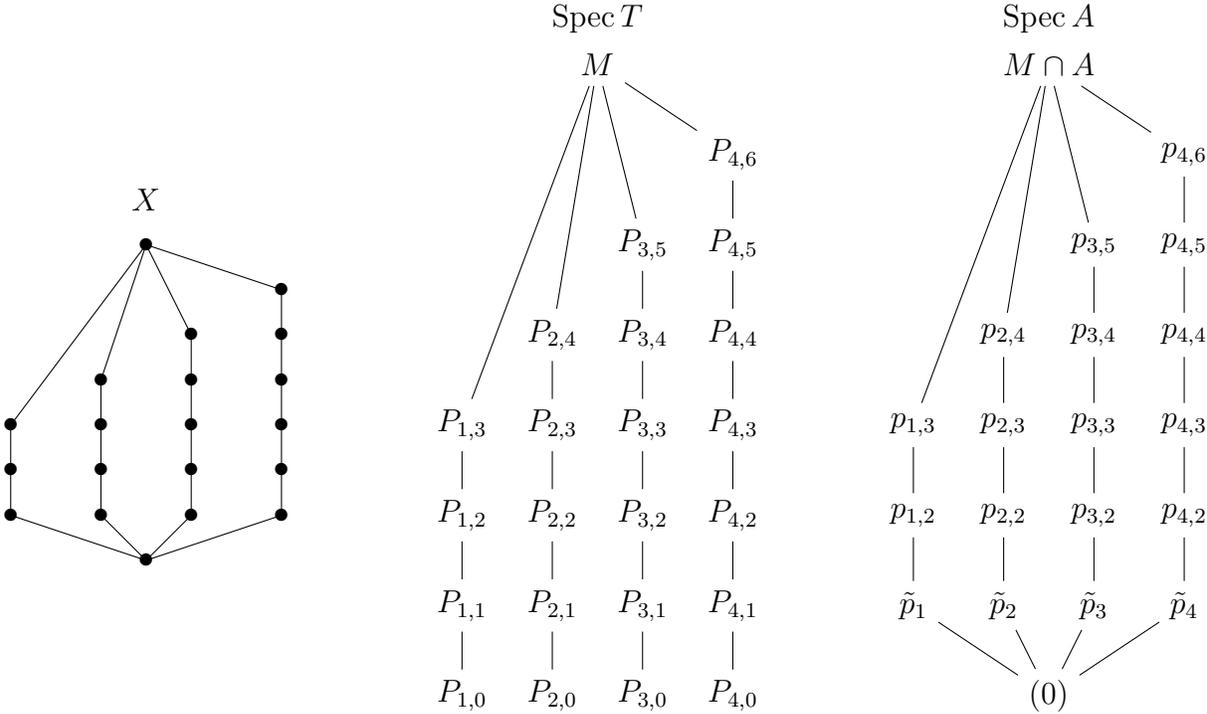

  Consider the poset $X$ as shown in Figure \ref{4567example}, which
  contains four maximal chains of lengths 4, 5, 6, and 7 which are
  disjoint except at their minimal and maximal elements. As a result
  of Theorem \ref{makeA}, there exists a local UFD $A$ such that there
  is a dimension-preserving saturated embedding from $X$ into
  $\Spec A$. To find this ring $A$, we first apply Lemma \ref{makeT}
  to find a complete local ring $T$ with chains of prime ideals as
  illustrated. We then apply Theorem \ref{makeA} to find a local UFD
  $A$ which contains the illustrated prime ideals.
\end{example}

\begin{example}
  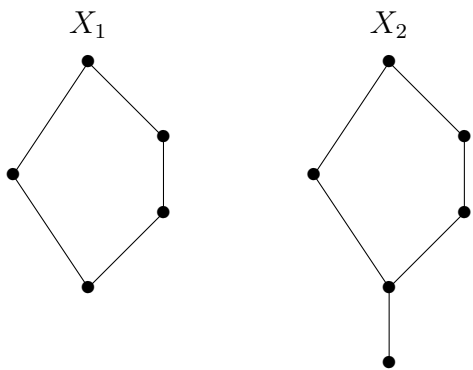
\begin{figure}[h!]
    \caption{A partially ordered set which admits a saturated
      embedding, but no dimension-preserving saturated embedding, into
      the prime spectrum of a Noetherian UFD ($X_1$).}
    \vspace{4ex}
    \centering
%

\begin{tikzpicture}
  
\node (l1) at (2,2) {$\bullet$};
\node (l2) at (2,3) {$\bullet$};

\node (r1) at (0,2.5) {$\bullet$};

\node (zero) at (1,1) {$\bullet$};
\node (max) at (1,4) {$\bullet$};
\node at (1,4.5) {$X_1$};

\draw (1,1) -- (2,2) -- (2,3) -- (1,4) -- (0,2.5) -- (1,1);


\node (l1) at (6,2) {$\bullet$};
\node (l2) at (6,3) {$\bullet$};

\node (r1) at (4,2.5) {$\bullet$};

\node (zero) at (5,0) {$\bullet$};
\node (ht1) at (5,1) {$\bullet$};
\node (max) at (5,4) {$\bullet$};
\node at (5,4.5) {$X_2$};

\draw (5,0) -- (5,1) -- (6,2) -- (6,3) -- (5,4) -- (4,2.5) -- (5,1);
\end{tikzpicture}
    \label{dim3ufd}
  \end{figure}

  Let $X_1, X_2$ be as in Figure \ref{dim3ufd}. There is no
  dimension-preserving saturated embedding from $X_1$ to a local UFD,
  as every dimension-3 local UFD is catenary. However, as a result of
  Corollary \ref{collapse} with $n_1 = 3$ and $n_2 = 4$, there is a
  dimension-preserving saturated embedding from $X_2$ to some local
  UFD. Therefore, for any local UFD $A$, every saturated embedding
  from $X_1$ into $\Spec A$ fails to preserve dimension.
\end{example}

\begin{example}
  \begin{figure}[h!]
    \caption{Partially ordered sets with ``disjoint'' ($Y_1$),
      ``partially collapsed'' ($Y_2$), and ``completely collapsed''
      ($Y_3$) height one elements.}
      \centering
%

\begin{tikzpicture}

  
\node (l1) at (0,1) {$\bullet$};
\node (l2) at (0,2) {$\bullet$};
\node (l3) at (0,3) {$\bullet$};

\node (m1) at (1,1) {$\bullet$};
\node (m2) at (1,2) {$\bullet$};
\node (m3) at (1,3) {$\bullet$};
\node (m4) at (1,4) {$\bullet$};

\node (r1) at (2,1) {$\bullet$};
\node (r2) at (2,2) {$\bullet$};

\node (zero) at (1,0) {$\bullet$};
\node (max) at (1,5) {$\bullet$};
\node at (1,5.5) {$Y_1$};

\draw (1,0) -- (0,1) -- (0,2) -- (0,3) -- (1,5) -- (1,4) -- (1,3);
\draw (1,3) -- (1,2) -- (1,1) -- (1,0) -- (2,1) -- (2,2) -- (1,5);


\node (l2) at (5,2) {$\bullet$};
\node (l3) at (5,3) {$\bullet$};

\node (m1) at (6,1) {$\bullet$};
\node (m2) at (6,2) {$\bullet$};
\node (m3) at (6,3) {$\bullet$};
\node (m4) at (6,4) {$\bullet$};

\node (r1) at (7,1) {$\bullet$};
\node (r2) at (7,2) {$\bullet$};

\node (zero) at (6,0) {$\bullet$};
\node (max)  at (6,5) {$\bullet$};
\node at (6,5.5) {$Y_2$};

\draw (6,0) -- (6,1) -- (5,2) -- (5,3) -- (6,5) -- (6,4) -- (6,3);
\draw (6,3) -- (6,2) -- (6,1) -- (6,0) -- (7,1) -- (7,2) -- (6,5);


\node (l2) at (10,2) {$\bullet$};
\node (l3) at (10,3) {$\bullet$};

\node (m1) at (11,1) {$\bullet$};
\node (m2) at (11,2) {$\bullet$};
\node (m3) at (11,3) {$\bullet$};
\node (m4) at (11,4) {$\bullet$};

\node (r2) at (12,2) {$\bullet$};

\node (zero) at (11,0) {$\bullet$};
\node (max) at (11,5) {$\bullet$};
\node at (11,5.5) {$Y_3$};

\draw (11,0) -- (11,1) -- (10,2) -- (10,3) -- (11,5) -- (11,4) -- (11,3);
\draw (11,3) -- (11,2) -- (11,1) -- (12,2) -- (11,5);

\end{tikzpicture}
      \label{345split}
\end{figure}
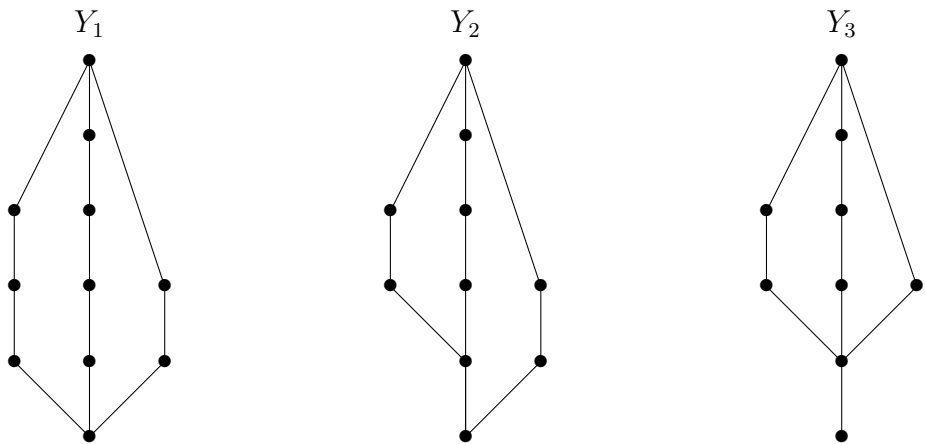

Let $Y_1$, $Y_2$, and $Y_3$ be the posets illustrated in
Figure \ref{345split}. As a result of Corollary \ref{collapse},
there exist local UFDs $A_1, A_2, A_3$ such that for $i = 1,2,3$,
there is a dimension-preserving saturated embedding from $Y_i$ to
$\Spec A_i$.  
\end{example}

\newpage

\subsection*{Acknowledgements}

We would like to
thank Andrew Bydlon for his many helpful comments, conversations, and
suggestions.

\newpage
\bibliography{thesisbib}
\bibliographystyle{plain}
\end{document}